\newtheorem{theorem}{Theorem}[section]
\newtheorem{lemma}[theorem]{Lemma}
\newtheorem*{rtheorem}{Main Theorem}
\newtheorem{corollary}[theorem]{Corollary}
\theoremstyle{definition}
\newtheorem*{definition}{Definition}
\newtheorem*{remark}{Remark}
\def\PG{\mathrm{PG}} \def\AG{\mathrm{AG}} 
  \def\Persp{\mathrm{Persp}}
\def\Aut{\mathrm{Aut}}
\def\PGammaL{\mathrm{P}\Gamma\mathrm{L}}
\def\PGL{\mathrm{PGL}}
\def\A{\mathcal{A}}  
\def\D{\mathcal{D}}  
 \def\K{\mathcal{K}}
\def\L{\mathcal{L}}  
 \def\P{\mathcal{P}}
\def\R{\mathcal{R}}
\def\F{\mathbb{F}}
\def\a{\alpha}
\title{The isomorphism problem for linear representations and their graphs}
\author{Philippe Cara\thanks{Partially supported by grant 15.263.08 of the `Fonds Wetenschappelijk Onderzoek-Vlaanderen'.} \and Sara Rottey \and Geertrui Van de Voorde\thanks{Supported by VUB-grant GOA62.}}
\date{}
\begin{document}
\maketitle
\begin{abstract} In this paper, we study the isomorphism problem for {\em linear representations}. A linear representation $T_n^*(\K)$ of a point set $\K$ is a point-line geometry, embedded in a projective space $\PG(n+1,q)$, where $\K$ is contained in a hyperplane. We put constraints on $\K$ which ensure that every automorphism of $T_n^*(\K)$ is induced by a collineation of the ambient projective space. This allows us to show that, under certain conditions, two linear representations $T_n^*(\K)$ and $T_n^*(\K')$ are isomorphic if and only if the point sets $\K$ and $\K'$ are $\PGammaL$-equivalent. We also deal with the slightly more general problem of isomorphic {\em incidence graphs} of linear representations.

In the last part of this paper, we give an explicit description of the group of automorphisms of $T_n^*(\K)$ that are induced by collineations of $\PG(n+1,q)$.

\end{abstract}

{\bf Keywords:} Linear representation, incidence graph, automorphism group

\section{Introduction}\label{section1}

A natural question in finite geometry is the following.

\begin{itemize}
\item[(Q)]
Let $G_1$ and $G_2$ be two geometries embedded in a projective space. Is every isomorphism between $G_1$ and $G_2$ induced by a collineation of the ambient projective space?
\end{itemize}

Of course, the answer to this question will strongly depend on the properties of the geometries $G_1$ and $G_2$. In this paper, we study this problem for particular geometries, namely {\em linear representations}.

\begin{definition} Let $\K$ be a point set in $H_\infty=\PG(n,q)$ and embed $H_\infty$ in $\PG(n+1,q)$. The {\em linear representation} $T_n^*(\K)$ of $\K$ is a point-line incidence structure with natural incidence, point set $\P$ and line set $\L$ as follows:
\begin{itemize}
\item[$\P$:] {\em affine} points of $\PG(n+1,q)$  (i.e. the points of $\PG(n+1,q)\setminus H_\infty$),
\item[$\L$:] lines of $\PG(n+1,q)$ through a point of $\K$, but not lying in $H_\infty$.
\end{itemize}
\end{definition}

We see that a linear representation $T_n^*(\K)$ in $\PG(n+1,q)$ is entirely defined by the point set $\K$ at infinity.

If the answer to (Q) is affirmative, then it follows that two linear representations $T_n^*(\K)$ and $T_n^*(\K')$ are isomorphic if and only if the point sets $\K$ and $\K'$ are $\PGammaL$-equivalent, which we denote by $\K\cong \K'$. In Section \ref{sec3}, we provide certain (weak) conditions on the point sets $\K$ and $\K'$ such that the answer to question (Q) is affirmative for $T_n^*(\K)$ and $T_n^*(\K')$. This leads to the following theorem (Corollary \ref{hoofd}).

\begin{rtheorem} Let $q>2$. Let $\K$ and $\K'$ denote point sets in $H_\infty=\PG(n,q)$ such that
\begin{itemize}
\item there is no plane of $H_\infty$ intersecting $\K$ in two intersecting lines, or in two intersecting lines minus their intersection point;
\item the closure $\overline{\K}$ (as defined in Section~\ref{closure}) is equal to $H_\infty$.
\end{itemize}
If $\alpha$ is an isomorphism of incidence structures between $T_n^*(\K)$ and $T_n^*(\K')$, then $\alpha$ is induced by a collineation of the ambient space mapping $\K$ to $\K'$.
\end{rtheorem}

For such $\K$, the automorphism group of $T_n^*(\K)$ is studied in Section \ref{sec5}, where an explicit description is given. In Section \ref{sec4}, we investigate what happens when the conditions on $\K$, found in Section \ref{sec3}, are not fulfilled.

Linear representations are mostly studied for point sets $\K$ that possess a lot of symmetry. For example, in the case $n=2$ and $\K$ a hyperoval, $T_2^*(\K)$ is a generalised quadrangle of order $(q-1,q+1)$.  Bichara, Mazzocca and Somma showed in \cite{bichara} that  for $\K, \K'$ hyperovals, $T_2^*(\K)\cong T_2^*(\K')$ if and only if $\K\cong \K'$. The answer to (Q) is proven positive when $\K$ is a regular hyperoval in $\PG(2,q)$ in \cite{Grund}. When $\K$ is a Buekenhout-Metz unital, question (Q) is answered by De Winter in \cite{stefaan}; in this case, the linear representation $T_2^*(\K)$ is a {\em semipartial geometry}. It is worth noticing that our result includes these cases.

A further generalisation of the concept of semipartial geometries is that of $(\alpha,\beta)$-geometries. Also these geometries can be constructed using linear representations. For more information, we refer to \cite{declerck}.

We will also deal with a slight variation of the question (Q), namely, we will consider isomorphisms between the {\em incidence graphs of linear representations}.

\begin{definition} We denote the point-line incidence graph of $T_n^*(\K)$ by $\Gamma_{n,q}(\K)$, i.e. the bipartite graph with classes $\P$ and $\L$ and adjacency corresponding to the natural incidence of the geometry.
\end{definition}
Whenever we consider the incidence graph $\Gamma_{n,q}(\K)$ of some linear representation $T_n^*(\K)$ of $\K$, we still regard the set of vertices as a set of points and lines in $\PG(n+1,q)$. In this way we can use the inherited properties of this space and borrow expressions such as the span of points, a subspace, incidence, and others.

It is easy to see that dealing with question (Q) for  $\Gamma_{n,q}(\K)$ and  $\Gamma_{n,q}(\K')$ is essentially the same as dealing with the question for $T_n^*(\K)$ and $T_n^*(\K')$ when there is no isomorphism between $\Gamma_{n,q}(\K)$ and $\Gamma_{n,q}(\K')$ mapping a vertex of $\Gamma_{n,q}(\K)$ corresponding to a point onto a vertex of $\Gamma_{n,q}(\K')$ corresponding to a line. In Section \ref{sec2}, we give a condition on $\K,\K'$ to ensure that every isomorphism between $\Gamma_{n,q}(\K)$ and $\Gamma_{n,q}(\K')$ preserves the set $\P$.

One of the reasons to generalise the answer to (Q) for incidence graphs of linear representations is the paper \cite{wij}, where we use them to construct new mutually non-isomorphic infinite families of semisymmetric graphs, i.e. regular edge-transitive, but not vertex-transitive graphs.

Also from a computational point of view, this generalisation is worthwhile: it follows from our main theorem that, under certain conditions, $\K\cong \K'$ if and only if $\Gamma_{n,q}(\K)\cong \Gamma_{n,q}(\K')$. By computer, checking whether two graphs are isomorphic often can be done a lot faster than checking whether two point sets in a projective space are $\PGammaL$-equivalent.

\section{A property of $\Gamma_{n,q}(\K)$}\label{sec2}

Note that an automorphism of $T_n^*(\K)$ as an incidence structure maps always points onto points and lines onto lines, whereas an automorphism of $\Gamma_{n,q}(\K)$ might map vertices corresponding to points onto vertices corresponding to lines. Of course, in this latter case, the sets $\P$ and $\L$ have equal size, hence $|\K|=q$.

Consider two linear representations $T_n^*(\K)$ and $T_n^*(\K')$. We may assume that both geometries are embedded in the same $(n+1)$-dimensional projective space, and that $\K$ and $\K'$ are embedded in the same hyperplane $H_\infty$. This implies that, for both bipartite graphs $\Gamma_{n,q}(\K)$ and $\Gamma_{n,q}(\K')$ we can denote the set of vertices of the first type (the points) by $\P$. We denote the set of vertices of the second type (the lines) in $\Gamma_{n,q}(\K)$ by $\L$ and in $\Gamma_{n,q}(\K')$ by $\L'$. For a vertex $v$ in a graph $\Gamma$ and a positive integer $i$ we write
$\Gamma_{i}(v)$ for the set of vertices at distance $i$ from $v$.

The following lemma provides a condition which forces the neighbourhood of a vertex in the set $\P$ and of a vertex in the set $\L$ to be essentially different.

\begin{lemma}\label{NVT}
Let $\K$ be a set of points of $H_\infty$ such that every point of $H_\infty \backslash \K$ lies on at least one tangent line to $\K$, then $\forall P \in \P, \forall L \in \L: \Gamma_{n,q}(\K)_{4}(P) \not\cong \Gamma_{n,q}(\K)_{4}(L)$.
\end{lemma}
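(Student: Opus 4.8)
\emph{Reduction to a count.} The graph $\Gamma_{n,q}(\K)$ is bipartite with classes $\P$ and $\L$, so every vertex at even distance from $P\in\P$ lies in $\P$; hence $\Gamma_{n,q}(\K)_4(P)\subseteq\P$ and $\Gamma_{n,q}(\K)_4(L)\subseteq\L$ both induce edgeless graphs, and the lemma is equivalent to $|\Gamma_{n,q}(\K)_4(P)|\neq|\Gamma_{n,q}(\K)_4(L)|$. So the plan is purely enumerative. The genuinely degenerate instances ($|\K|\le 1$ or $n\le 1$) are settled by inspection, and the case worth keeping in mind is $|\K|=q$, in which $|\P|=|\L|$; I carry it along below.

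\emph{The point side.} To count $\Gamma_{n,q}(\K)_4(P)$, attach to each affine point $R\neq P$ the point at infinity $y_R=\langle P,R\rangle\cap H_\infty$ of the line $PR$. One has $d(P,R)=2$ iff $y_R\in\K$, and for $y_R\notin\K$ I claim $d(P,R)=4$ iff $y_R$ lies on a secant line of $\K$: for $x\in\K$ the line $M=\langle R,x\rangle$ belongs to $\L$, avoids $P$, and lies at distance $3$ from $P$ iff $\langle P,M\rangle\cap H_\infty=\langle y_R,x\rangle$ is a secant, and as $x$ runs through $\K$ one of these lines is a secant exactly when $y_R$ is on a secant. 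Since each point $y$ of $H_\infty$ equals $y_R$ for exactly $q-1$ affine points $R\neq P$ (those on the affine part of $\langle P,y\rangle$) and these sets are pairwise disjoint, $|\Gamma_{n,q}(\K)_4(P)|=(q-1)\,|\S|$, where $\S$ is the set of points of $H_\infty\setminus\K$ lying on a secant of $\K$. In particular $|\Gamma_{n,q}(\K)_4(P)|\le(q-1)\bigl(|H_\infty|-|\K|\bigr)$, with equality exactly when every point of $H_\infty\setminus\K$ lies on a secant.

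\emph{The line side.} Writing $\Gamma_i(L)$ for $\Gamma_{n,q}(\K)_i(L)$, use $|\Gamma_4(L)|=|\L|-1-|\Gamma_2(L)|-|\Gamma_{\ge 6}(L)|$. Elementary counts give $|\L|=|\K|\,q^n$ and $|\Gamma_2(L)|=q(|\K|-1)$, so for $|\K|=q$ one gets $|\L|-1-|\Gamma_2(L)|=(q-1)(|H_\infty|-|\K|)$, the same bound as above. For the last term: a line $M\in\L$ with $M\neq L$ and $M\notin\Gamma_2(L)$ is at distance $\ge 6$ from $L$ iff it carries no affine point at distance $3$ from $L$; reading this inside the plane, resp.\ the $3$-space, spanned by $L$ and $M$, and writing $x_L=L\cap H_\infty$, $x_M=M\cap H_\infty$, it means either $x_M=x_L$ and $\langle L,M\rangle\cap H_\infty$ is a tangent of $\K$, or $x_M\neq x_L$, $L$ and $M$ are skew, and $\K\cap(\langle L,M\rangle\cap H_\infty)\subseteq\langle x_L,x_M\rangle$. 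Counting these two families yields $|\Gamma_{\ge 6}(L)|=(q-1)\,t(x_L)+(q^2-q)\,\beta(L)$, with $t(x_L)$ the number of tangents of $\K$ through $x_L$ and $\beta(L)\ge 0$. The hypothesis forbids that every line of $H_\infty$ through $x_L$ be a secant (else $|\K|-1\ge(q^n-1)/(q-1)$, impossible once $n\ge 2$ and $|\K|=q$), so $t(x_L)\ge 1$ and hence $|\Gamma_4(L)|<(q-1)(|H_\infty|-|\K|)$.

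\emph{Comparison, and the main obstacle.} If every point of $H_\infty\setminus\K$ lies on a secant, then $|\Gamma_{n,q}(\K)_4(P)|$ attains the bound $(q-1)(|H_\infty|-|\K|)$ whereas $|\Gamma_{n,q}(\K)_4(L)|$ does not, and the lemma follows. Otherwise $\mathcal{T}:=(H_\infty\setminus\K)\setminus\S\neq\emptyset$, $|\Gamma_{n,q}(\K)_4(P)|=(q-1)(|H_\infty|-|\K|)-(q-1)|\mathcal{T}|$, and it remains to exclude the coincidence $(q-1)|\mathcal{T}|=(q-1)t(x_L)+(q^2-q)\beta(L)$, i.e.\ $|\mathcal{T}|=t(x_L)+q\,\beta(L)$. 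Here the tangency hypothesis is used once more: a point of $\mathcal{T}$ joins every point of $\K$ by a tangent, so it lies on exactly $|\K|$ tangents, and a double count of point--tangent incidences, which the hypothesis makes essentially full, constrains $|\mathcal{T}|$, $t(x_L)$ and $\beta(L)$ tightly enough to rule out the equality. I expect this last numerical comparison, together with the bookkeeping behind the closed form for $|\Gamma_{\ge 6}(L)|$, to be the crux of the argument; everything else is routine incidence counting in $\PG(n+1,q)$.
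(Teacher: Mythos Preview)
Your argument has a genuine gap. In the case $\mathcal T\neq\emptyset$ you must rule out $|\mathcal T|=t(x_L)+q\,\beta(L)$ for \emph{every} choice of $L$, and you only say you ``expect'' a double count to handle this. That is the decisive step, not routine bookkeeping: $t(x_L)$ and $\beta(L)$ vary with $x_L$, you have not defined $\beta(L)$ precisely (it conceals a count over planes of $H_\infty$ through $x_L$ whose intersection with $\K$ collapses onto a single secant through $x_L$, weighted by the relevant choices of $M$), and no reason is given why the coincidence cannot occur. The degenerate cases you wave away are also not innocuous: for $|\K|=1$ both $\Gamma_4(P)$ and $\Gamma_4(L)$ are empty, so the cardinality comparison collapses outright (the paper's own argument tacitly needs $|\K|\ge 2$ as well, when it selects a second point of $\K$).

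Independently of the gap, your approach is quite different from the paper's. The paper does not count at all: it exhibits a graph-theoretic invariant that separates the two bipartition classes. For every $L\in\L$ it produces a vertex of $\Gamma_4(L)$ \emph{all} of whose neighbours lie in $\Gamma_3(L)$ (namely a line parallel to $L$ inside a plane spanned by $L$ and a second point of $\K$), while for every $P\in\P$ it shows that \emph{no} vertex $T\in\Gamma_4(P)$ has this property --- the tangency hypothesis is invoked exactly once, to find a point $P_3\in\K$ on a tangent through $\langle P,T\rangle\cap H_\infty$, so that the neighbour $TP_3$ of $T$ is forced outside $\Gamma_3(P)$. This is a few lines of synthetic geometry, uses the hypothesis in one transparent place, and gives directly what Corollary~\ref{cor22} needs. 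Strictly speaking it is a statement about how $\Gamma_4(v)$ sits inside the ambient graph rather than about $\Gamma_4(v)$ as an abstract (edgeless) graph; your reduction to bare cardinality, while a defensible literal reading of the displayed $\not\cong$, is therefore arguing for something the paper neither intends precisely nor proves.
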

\begin{proof}
We will prove that, for every line $L \in \L$, the set of vertices $\Gamma_{n,q}(\K)_{4}(L)$
contains at least one vertex that has all its neighbours in
$\Gamma_{n,q}(\K)_{3}(L)$, while for every point $P \in \P$, a vertex in the set $\Gamma_{n,q}(\K)_{4}(P)$
cannot have all its neighbours in $\Gamma_{n,q}(\K)_{3}(P)$.

To prove the first claim, consider a line $L \in \L$ with $L \cap H_\infty= P_1 \in \K$. Choose an affine point $Q$ on $L$ and a point $P_2 \in \K$ different from $P_1$.
Take a point $R$ on $Q P_2$, not equal to $Q$ or $P_2$, then clearly the line $R P_1 \in \Gamma_{n,q}(\K)_{4}(L)$. We will show that $R P_1$ has all its neighbours in $\Gamma_{n,q}(\K)_{3}(L)$. Consider a neighbour $S$ of $R P_1$, i.e $ S \in R P_1 \setminus \{P_1\}$.
The line $S P_2$ meets $L$ in a point $T$. Since $T \in \Gamma_{n,q}(\K)_{1}(L)$ and $T P_2 \in \Gamma_{n,q}(\K)_{2}(L)$, it follows that $S \in \Gamma_{n,q}(\K)_{3}(L)$.

Consider now a point $P \in \P$ and a point $T \in \Gamma_{n,q}(\K)_{4}(P)$. Consider the following minimal path of length 4 from $T$ to $P$: the point $T$, a line $Q_1 P_1 \in \Gamma_{n,q}(\K)_{3}(P)$ containing $T$ for some $P_1 \in \K$ and affine point $Q_1 \in \Gamma_{n,q}(\K)_{2}(P)$, the line $P P_2 \in \Gamma_{n,q}(\K)_{1}(P)$ containing $Q_1$, for some $P_2 \in \K$ different from $P_1$, and finally the point $P$. Consider the point $R= PT \cap H_\infty$, then it follows from our construction that $R$ lies on the line $P_1P_2$. Moreover $R$ is different from $P_1,P_2$, and hence, since $PR \notin \Gamma_{n,q}(\K)_{1}(P)$, we have $R$ not in $\K$. By assumption, there is a tangent line of $\K$ through $R$, say $RP_3$, with $P_3\in \K$. The line $TP_3$ is a neighbour of $T$. Suppose that $TP_3$ belongs to $\Gamma_{n,q}(\K)_{3}(P)$, then there exists a line $PT'$ through a point $P_4\in \K$, with $T'$ on $TP_3$, which implies that $RP_3$ contains the point $P_4\in \K$, a contradiction.
\end{proof}

\begin{corollary}\label{cor22}
Let $|\K|\neq q$ or let $\K$ be a set of points of $H_\infty$ such that every point of $H_\infty \backslash \K$ lies on at least one tangent line to $\K$. Suppose $\alpha$ is an isomorphism between $\Gamma_{n,q}(\K)$ and $\Gamma_{n,q}(\K')$, for some set $\K'$ in $H_\infty$, then $\alpha$ stabilises $\P$.
\end{corollary}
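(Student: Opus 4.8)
The plan is to treat separately the two alternatives appearing in the hypothesis, after recording one basic degree count. In $\Gamma_{n,q}(\K)$ every vertex of $\P$ has degree $|\K|$ (for an affine point $P$, the lines of $T_n^*(\K)$ through $P$ are exactly the lines $PP_1$ with $P_1\in\K$), while every vertex of $\L$ has degree $q$ (a line of $\PG(n+1,q)$ not contained in $H_\infty$ carries exactly $q$ affine points); likewise in $\Gamma_{n,q}(\K')$ the vertices of $\P$ have degree $|\K'|$ and those of $\L'$ have degree $q$. Since $|\P|=q^{n+1}$, $|\L|=|\K|q^{n}$, $|\L'|=|\K'|q^{n}$, and a graph isomorphism preserves degrees, the mere existence of $\alpha$ already forces the two degree multisets to coincide.

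Suppose first that $|\K|\neq q$. Matching the degree multisets then gives $|\K'|=|\K|$ (in particular $|\K'|\neq q$), so that in each of the two graphs the vertices of degree $|\K|$ are precisely those of $\P$; since $\alpha$ preserves degrees, $\alpha(\P)=\P$. (If $\K=\emptyset$ the statement is vacuous.)

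Now suppose $|\K|=q$; then by hypothesis every point of $H_\infty\setminus\K$ lies on a tangent line to $\K$. In this case $\Gamma_{n,q}(\K)$ is $q$-regular, hence so is $\Gamma_{n,q}(\K')$, which forces $|\K'|=q\ge 2$. Assume, for a contradiction, that $\alpha(\P)\neq\P$; then $\alpha(\P)\not\subseteq\P$, so some $P_0\in\P$ satisfies $\alpha(P_0)=L_0'\in\L'$. Since a graph isomorphism preserves distances and adjacency, $\alpha$ maps $\Gamma_{n,q}(\K)_{i}(P_0)$ onto $\Gamma_{n,q}(\K')_{i}(L_0')$ for every $i$, and a vertex $w\in\Gamma_{n,q}(\K)_{4}(P_0)$ has all of its neighbours at distance $3$ from $P_0$ if and only if $\alpha(w)$ has all of its neighbours at distance $3$ from $L_0'$. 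Now the first claim in the proof of Lemma~\ref{NVT}, applied to $T_n^*(\K')$ (legitimate since $|\K'|\ge 2$), produces a vertex $w'\in\Gamma_{n,q}(\K')_{4}(L_0')$ all of whose neighbours lie in $\Gamma_{n,q}(\K')_{3}(L_0')$; hence $\alpha^{-1}(w')$ is a vertex of $\Gamma_{n,q}(\K)_{4}(P_0)$ all of whose neighbours lie in $\Gamma_{n,q}(\K)_{3}(P_0)$, contradicting the second claim in the proof of Lemma~\ref{NVT}, which applies to $T_n^*(\K)$ precisely because every point of $H_\infty\setminus\K$ lies on a tangent to $\K$. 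Therefore $\alpha(\P)=\P$.

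I expect the one delicate point to be the following. Lemma~\ref{NVT} as stated compares $\Gamma_4(P)$ with $\Gamma_4(L)$ inside a single linear representation, whereas here one must compare balls in $\Gamma_{n,q}(\K)$ with balls in $\Gamma_{n,q}(\K')$, so the lemma cannot be used as a black box; instead one has to extract from its proof the two asymmetric local features (a line-vertex always sees a distance-$4$ vertex with no neighbour at distance $5$, an affine point never does) and check that these features are invariant under an isomorphism of pointed graphs. A convenient aspect of the argument is its built-in asymmetry: only $\K$ is required to satisfy the tangent condition, while for $\K'$ it suffices that $|\K'|\ge 2$, which is automatic once $|\K|=q$.
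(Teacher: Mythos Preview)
Your argument is correct and follows the same line as the paper's: the case $|\K|\neq q$ is disposed of by the degree count, and the case $|\K|=q$ with the tangent hypothesis is handled via the two local properties established in the proof of Lemma~\ref{NVT}. The paper's own proof is a one-liner (``any graph isomorphism preserves distance and hence neighbourhoods''), so you have in fact written out more than the authors did; in particular, your observation that Lemma~\ref{NVT} cannot be invoked as a black box across two different graphs, and that one must instead apply claim~(A) to $\K'$ (needing only $|\K'|\ge 2$) and claim~(B) to $\K$ (needing the tangent condition), makes explicit a point the paper leaves implicit.
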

\begin{proof}
Since any graph isomorphism preserves distance and hence neighbourhoods, no isomorphism between $\Gamma_{n,q}(\K)$ and $\Gamma_{n,q}(\K')$ maps a vertex in $\P$ to a vertex in $\L'$.
\end{proof}

\begin{remark}\label{rem1} If $\K$ does not satisfy the conditions of Corollary \ref{cor22}, then there exist counterexamples to the corollary. Let $\K$ be the $q$-arc $\{(0,1,x,x^2)\mid x \in \F_q\}$, $q$ even, embedded in the plane $H_\infty$ of $\PG(3,q)$ with equation $X_0=0$. Consider the mapping $\phi$ from the affine point $(1,a,b,c)$ to the line $\langle (0,1,a,a^2),(1,0,c,b^2)\rangle$, this map preserves the edges of the graph $\Gamma_{2,q}(\K)$ but switches the sets $\P$ and $\L$.
\end{remark}

\section{The isomorphism problem for linear representations}\label{sec3}
In this section, we will deal with question (Q) for (the incidence graphs of) linear representations. An isomorphism between $\Gamma_{n,q}(\K)$ and $\Gamma_{n,q}(\K')$ (or $T_n^*(\K)$ and $T_n^*(\K')$) that is induced by a collineation of the ambient projective space $\PG(n+1,q)$ is called {\em geometric}.

As said before, we will provide certain conditions on $\K$, to ensure that every isomorphism between $\Gamma_{n,q}(\K)$ and $\Gamma_{n,q}(\K')$ (or $T_n^*(\K)$ and $T_n^*(\K')$) is geometric. One of these conditions is related to the {\em closure} of $\K$, which we will now define.

\begin{definition}\label{closure}
If a point set $S$ contains a frame of $\PG(n,q)$, then its {\em closure} $\overline{S}$ consists of the points of the
smallest $n$-dimensional subgeometry of $\PG(n,q)$ containing all points of $S$.
\end{definition}
The closure $\overline{S}$ of a point set $S$ can be constructed recursively as follows:
\begin{itemize}
\item[(i)] determine the set $\A$ of all subspaces of $\PG(n,q)$ spanned by an arbitrary number of points of $S$;
\item[(ii)] determine the set $\overline{S}$ of points that occur as the exact intersection of
two subspaces in $\A$, if $\overline{S} \neq S$ replace $S$ by
$\overline{S}$ and go to~(i), otherwise stop.
\end{itemize}

For $n=2$, this recursive construction coincides with the definition of the closure of a set of points in a plane containing a quadrangle, given in \cite[Chapter XI]{Hughes}.

We will also introduce the notion of {\em rigid} subspaces:

\begin{definition} Let $\alpha$ be an isomorphism between the graphs $\Gamma_{n,q}(\K)$ and $\Gamma_{n,q}(\K')$ (or between $T_n^*(\K)$ and $T_n^*(\K')$) that preserves the set $\P$.  We
will say that a $k$-subspace $\pi_\infty$ of $H_\infty$ is \emph{$\a$-rigid}
if for every $(k+1)$-subspace $\pi$ through
$\pi_\infty$, not contained in $H_\infty$, the point set $\{ \a(P)|P
\in \pi, P\notin H_\infty\}$ spans a $(k+1)$-subspace.
\end{definition}

We see that the definition of rigid is well-defined for the graph $\Gamma_{n,q}(\K)$, since, as said before, we consider this graph to be embedded in $\PG(n+1,q)$.

From now on, we fix an isomorphism $\alpha$ between the graphs $\Gamma_{n,q}(\K)$ and $\Gamma_{n,q}(\K')$ that preserves $\P$, and if a
subspace is called rigid, we mean $\a$-rigid. It follows from the
definition of the graph $\Gamma_{n,q}(\K)$ that every point of
$\mathcal{\K}$ is rigid.

It is clear that if an isomorphism $\beta$ between $\Gamma_{n,q}(\K)$ and $\Gamma_{n,q}(\K')$ (or $T_n^*(\K)$ and $T_n^*(\K')$) is induced by a collineation, necessarily, every point of $H_\infty$ is $\beta$-rigid. Our way of approaching question (Q) is to find conditions on the point sets $\K$, $\K'$ that force every point of $H_\infty$ to be rigid (for all isomorphisms between $\Gamma_{n,q}(\K)$ and $\Gamma_{n,q}(\K')$). Finally, these conditions will enable us to prove that every isomorphism between $\Gamma_{n,q}(\K)$ and $\Gamma_{n,q}(\K')$ (or $T_n^*(\K)$ and $T_n^*(\K')$) is induced by a collineation.

We start of with an easy lemma on rigid subspaces.

\begin{lemma}\label{intersectie} If $\pi_1$ and $\pi_2$ are rigid
subspaces meeting in at least one point, then $\pi_1\cap \pi_2$ is a rigid
subspace.
\end{lemma}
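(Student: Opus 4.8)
Write $\mu=\pi_1\cap\pi_2$ and let $m=\dim\mu$; by hypothesis $m\ge 0$, and $\mu$ is a genuine $m$-subspace of $H_\infty$. To show $\mu$ is $\a$-rigid I fix an $(m+1)$-subspace $\sigma$ of $\PG(n+1,q)$ through $\mu$ with $\sigma\not\subseteq H_\infty$, and must prove that the image under $\a$ of the affine points of $\sigma$ spans an $(m+1)$-subspace. Note first that $\sigma\cap H_\infty$ is a hyperplane of $\sigma$ containing the $m$-subspace $\mu$, hence $\sigma\cap H_\infty=\mu$; in particular $\sigma$ has exactly $q^{m+1}$ affine points, and $\sigma\cap\pi_i=\mu$ for $i=1,2$.

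The strategy is to lift $\sigma$ inside each rigid $\pi_i$ and then intersect. Put $\rho_i:=\langle\pi_i,\sigma\rangle$. Since $\pi_i\cap\sigma=\mu$ is $m$-dimensional, $\rho_i$ is a $(k_i+1)$-subspace through $\pi_i$, and $\rho_i\not\subseteq H_\infty$ because $\sigma\not\subseteq H_\infty$. By $\a$-rigidity of $\pi_i$, the set $\rho_i':=\langle\,\a(P)\mid P\in\rho_i,\ P\notin H_\infty\,\rangle$ is a $(k_i+1)$-subspace. As $\a$ preserves $\P$, the images $\a(P)$ of the $q^{k_i+1}$ affine points of $\rho_i$ are $q^{k_i+1}$ distinct affine points lying in $\rho_i'$; since a $(k_i+1)$-subspace not contained in $H_\infty$ has exactly $q^{k_i+1}$ affine points, $\a$ maps the affine points of $\rho_i$ \emph{onto} the set of all affine points of $\rho_i'$.

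Now I claim $\rho_1\cap\rho_2=\sigma$. Indeed $\langle\rho_1,\rho_2\rangle=\langle\pi_1,\pi_2,\sigma\rangle$; here $\langle\pi_1,\pi_2\rangle$ has dimension $k_1+k_2-m$, and it meets $\sigma$ in $\sigma\cap H_\infty=\mu$, so $\dim\langle\rho_1,\rho_2\rangle=(k_1+k_2-m)+(m+1)-m=k_1+k_2-m+1$, whence $\dim(\rho_1\cap\rho_2)=(k_1+1)+(k_2+1)-(k_1+k_2-m+1)=m+1$; as $\sigma\subseteq\rho_1\cap\rho_2$ and both have dimension $m+1$, equality follows. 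Using the previous paragraph together with the injectivity of $\a$, an affine point $x$ of $\rho_1'\cap\rho_2'$ is $\a(y)$ for a (unique) affine point $y$ of $\rho_1$ and also $\a(z)$ for an affine point $z$ of $\rho_2$, so $y=z\in\rho_1\cap\rho_2=\sigma$; conversely the images of the affine points of $\sigma$ clearly lie in $\rho_1'\cap\rho_2'$. Hence the affine points of $\rho_1'\cap\rho_2'$ are exactly the $q^{m+1}$ points $\a(P)$ with $P\in\sigma$, $P\notin H_\infty$. Since $\rho_1'\cap\rho_2'$ is a subspace not contained in $H_\infty$, having exactly $q^{m+1}$ affine points forces $\dim(\rho_1'\cap\rho_2')=m+1$. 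Finally, these $q^{m+1}$ affine points span a subspace contained in $\rho_1'\cap\rho_2'$ which cannot have dimension $\le m$ (an $m$-subspace has fewer than $q^{m+1}$ points for every $q$), so their span equals the $(m+1)$-subspace $\rho_1'\cap\rho_2'$. This shows $\mu$ is $\a$-rigid.

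The only delicate point is the dimension bookkeeping needed to pin down $\rho_1\cap\rho_2=\sigma$ \emph{exactly} (rather than merely $\supseteq\sigma$); this is precisely where the hypothesis that $\pi_1,\pi_2$ meet in at least a point is used, so that $\mu$ is a subspace and the Grassmann identities apply. The rest is a routine point count based on the fact that a $d$-subspace of $\PG(n+1,q)$ not lying in $H_\infty$ has exactly $q^{d}$ affine points.
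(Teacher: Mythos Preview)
Your proof is correct and follows essentially the same approach as the paper's: both lift through $\pi_i$ to $(k_i+1)$-spaces $\rho_i$ (the paper writes $\langle R,\pi_i\rangle$ for an affine point $R$, which is the same as your $\langle \pi_i,\sigma\rangle$), use rigidity to get $(k_i+1)$-dimensional image spaces, and then intersect. The paper's version is terser, asserting without details that $\mu_1\cap\mu_2$ consists exactly of the images of the affine points of $\langle R,\pi_1\rangle\cap\langle R,\pi_2\rangle$ and hence has dimension $m+1$; you supply the Grassmann bookkeeping and the $q^{d}$-point count that make these claims explicit.
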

\begin{proof} Let $R$ be an affine point, let
$\dim(\pi_i)=k_i$ and $\dim(\pi_1\cap \pi_2)=m\geq 0$. Since $\pi_i$ is
rigid, the affine points of $\langle R,\pi_i\rangle$ are mapped onto the
affine points of a $(k_i+1)$-dimensional space $\mu_i$. As $\mu_1$ and
$\mu_2$ are subspaces of $\PG(n+1,q)$, they intersect in a subspace of
$\PG(n+1,q)$. Moreover, as this subspace contains exactly the images
under $\a$ of the points that are contained in $\langle
R,\pi_1\rangle\cap\langle R, \pi_2\rangle$, it has dimension $m+1$.
This implies that $\pi_1\cap \pi_2$ is rigid.
\end{proof}

In the previous lemma, we proved that the intersection of rigid subspaces is a rigid space. The next step towards the proof of our main theorem is to show that the span of rigid subspaces is again a rigid subspace. Unfortunately, we need to impose certain restrictions to be able to show this result.

Recall that the points of $\K$ are rigid. In the next lemma, we will give a condition that ensures that the span of two points of $\K$ is a rigid line. This result will be generalised in Lemma \ref{opspanning} to arbitrary rigid points and spaces of arbitrary dimension.

\begin{definition} For a line $L$ of $\PG(n+1,q)$ not in $H_\infty$, we define $\infty(L)$ to be the point $L\cap H_\infty$.\end{definition}

\begin{lemma} \label{help}Let $q>2$. Suppose that no plane of $H_\infty$ intersects $\K$ or $\K'$ in two intersecting lines or in two intersecting lines minus their intersection point. Let $P_1$ and $P_2$ be two points of $\K$, then $P_1P_2$ is rigid.
 \end{lemma}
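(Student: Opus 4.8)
The goal is to show that if $P_1, P_2 \in \K$, then the line $\ell = P_1 P_2$ is $\a$-rigid, i.e.\ for every plane $\pi$ through $\ell$ not contained in $H_\infty$, the images under $\a$ of the affine points of $\pi$ span a plane (not all of $\PG(n+1,q)$, and not a line). The natural strategy is to work entirely inside one such plane $\pi$ and analyse the induced structure. Fix an affine point $R \in \pi$, so $\pi = \langle R, \ell \rangle$. Since $P_1$ and $P_2$ are rigid points, the affine points of the plane $\langle R, P_1 \rangle \cdots$ — more precisely, the affine lines $RP_1$ and $RP_2$ are mapped by $\a$ to affine lines (these are actual lines of $\Gamma$, so this is immediate), and more generally for any affine point $R' \in \pi$ the line $R'P_1$ is mapped to an affine line $\a(R'P_1)$ through $\infty(\a(R'P_1)) \in \K'$, and similarly for $P_2$. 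The affine points of $\pi$ are exactly the $q^2$ points lying on the lines $R'P_1$ for $R'$ ranging over the $q$ affine points of the line $RP_2$ (a "grid" of affine lines through $P_1$ meeting a transversal through $P_2$). The images of these points therefore lie in the union of $q$ affine lines through fixed points of $\K'$; I want to force this union to lie in a plane.

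The first key step is to identify the points $\a(P_1)':=\infty(\a(RP_1))$ and analogously; a priori these could depend on $R$. So I would first show that $\infty(\a(R'P_1))$ is independent of the choice of affine point $R'$: two parallel... no, rather, two lines $R'P_1$ and $R''P_1$ share the point $P_1$; since $\a$ preserves the collinearity/incidence structure of the graph, the images $\a(R'P_1)$ and $\a(R''P_1)$ are two lines of $T_n^*(\K')$ meeting in the affine point $\a(R' R'' \cap \ldots)$ — one must argue that the images of all lines through a fixed rigid point $P_1$ concur at infinity, which is essentially the content of $P_1$ being rigid applied in every plane through $P_1$. Granting that, write $P_1^\a, P_2^\a \in \K'$ for these two infinity points; they are distinct because $\a$ is injective on lines. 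Now the $q^2$ affine points of $\pi$ map into the set of $q^2$ affine points covered by the $q$ lines $\a(R'P_1)$, all through $P_1^\a$, together with the $q$ lines through $P_2^\a$ obtained the same way, and these two pencils "match up" combinatorially exactly as the grid in $\pi$ does. If the $2q$ lines $\{\a(R'P_1)\} \cup \{\a(R'P_2)\}$ all lay in a common plane $\pi'$, we would be done (the image spans $\pi'$, which is a plane, not a line, since it contains two distinct lines through $P_1^\a$).

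The heart of the argument — and the step I expect to be the main obstacle — is ruling out that these image lines fail to be coplanar, and this is exactly where the hypothesis on $\K'$ enters. Suppose the images are not coplanar: then the plane spanned by $\a(RP_1)$ and $\a(RP_2)$ does not contain some other image line, say $\a(R'P_1)$ with $R'$ on $RP_2$. I would chase incidences to produce, inside $H_\infty$, a configuration forbidden by the hypothesis. Concretely, the lines of $\K'$ that appear: each affine line $R'P_1$ in $\pi$ meets $\ell = P_1P_2$, hence shares the affine point $R' \cap \ell$... I need to track which points of $\K'$ are forced. The expectation is that non-rigidity of $\ell$ forces a whole projective line's worth of points of $\K'$ (coming from the "extra dimension" the images span) together with the points $P_1^\a, P_2^\a$ and the line structure they sit on, yielding either two intersecting lines of $\K'$ or two intersecting lines minus a point inside some plane of $H_\infty$ — contradicting the hypothesis on $\K'$. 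The condition $q>2$ will be needed to have enough affine points $R'$ (at least $3$) on the transversal so that the forbidden configuration genuinely materialises rather than degenerating. The symmetric hypothesis on $\K$ (as opposed to only $\K'$) is presumably used to run the same analysis for $\a^{-1}$, or to handle the points $P_1^\a, P_2^\a$ versus their preimages; I would keep both in play and invoke whichever orientation is convenient at each incidence chase. Once coplanarity of the image lines is established, rigidity of $\ell$ is immediate: the affine part of $\pi$ maps into a plane and, being the union of two distinct concurrent affine lines, spans exactly that plane.
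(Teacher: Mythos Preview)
Your plan has a genuine gap at its first key step. You write that you would ``first show that $\infty(\a(R'P_1))$ is independent of the choice of affine point $R'$\,'' and that this ``is essentially the content of $P_1$ being rigid applied in every plane through $P_1$.'' But that is not what rigidity of the point $P_1$ gives you: by definition, $P_1$ rigid only says that each line through $P_1$ maps (on affine points) to a line --- which is automatic since $\a$ sends $\L$ to $\L'$. It does \emph{not} say those image lines share a common point at infinity. That concurrence statement is exactly Lemma~\ref{crucial} in the paper, which is proved \emph{after} Lemma~\ref{help} and in fact relies on it. So when you write ``Granting that, write $P_1^\a, P_2^\a \in \K'$ for these two infinity points,'' you are assuming something very close to the conclusion.

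The paper proceeds quite differently. It does not try to establish concurrence first; instead it supposes $\infty(\a(L_1)) \neq \infty(\a(L_i))$ for some pair of lines through $P_1$ in $\pi$, and shows by an incidence count that then \emph{all} the points $\infty(\a(L_i))$ and $\infty(\a(M_j))$ are distinct, so that the $2q$ image lines form the two reguli of a hyperbolic quadric $Q^+(3,q)$. This quadric is the missing structural ingredient in your sketch. The argument then splits: if $P_1P_2$ carries a third point $P_3 \in \K$, a line through $P_3$ would map to a $q$-secant of the quadric not in a regulus, a contradiction. If not, the quadric meets $H_\infty$ in two lines whose $2q$ non-intersection points are forced into $\K'$ --- this is where the ``two intersecting lines minus their intersection point'' hypothesis bites, and the hypothesis then yields an extra point $P \in \K'$ allowing one to run the first case for $\a^{-1}$. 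Your intuition that non-rigidity forces ``a whole projective line's worth of points of $\K'$'' is in the right spirit, but without the quadric you have no mechanism to produce it, and your case structure (in particular the role of a third point of $\K$ on $P_1P_2$) is absent.
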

\begin{proof}
Both $\alpha$ as $\alpha^{-1}$ define an isomorphism. From our proof it will follow that if $\K$ has the property that no plane of $H_\infty$ intersects it in two intersecting lines or in two intersecting lines minus their intersection point, the set $\K'$ has this property as well, and conversely. Hence it does not matter whether $\K$ or $\K'$ has the property; we assume that no plane of $H_\infty$ intersects $\K'$ in two intersecting lines or in two intersecting lines minus their intersection point.

First, suppose the line $N$ of $H_\infty$ containing the points $P_1, P_2$ of $\K$, contains an extra point $P_3\in\K$. Let $\pi$ be a plane through $P_1P_2$, not in $H_\infty$, let $L_i$, $i=1,\ldots,q$, be the $q$ lines in $\pi$ through $P_1$, different from $P_1 P_2$, and let $M_j$, $j=1,\ldots,q$, be the $q$ lines through $P_2$ in $\pi$, different from $P_1 P_2$. It is clear that the distance between $L_1$ and $L_i$, $i=2,\ldots,q$ in $\Gamma_{n,q}(\K)$ is $4$.

Suppose that $\infty(\a(L_1))$ is
different from $\infty(\a(L_i))$ for some $2\leq i\leq q$. If for some $j\neq k$, $\a(M_j)$ and $\a(M_k)$ meet
$H_\infty$ in the same point, then $\a(L_1)$ and $\a(L_i)$ are lines in the
same plane, hence, meeting in an affine point. This would mean that
$\a(L_1)$ and $\a(L_i)$ are at distance $2$ in $\Gamma_{n,q}(\K')$; a
contradiction since $\a$ is an isomorphism.

This implies that if $\infty(\a(L_1))$ is different from $\infty(\a(L_i))$ for some $i$, then the points $\infty(\a(M_j))$, $1\leq j\leq q$, are distinct and hence all the points $\infty(\a(L_i))$, $1\leq i\leq q$, are also mutually different. Moreover, it is  impossible that $\infty(\a(L_i))=\infty(\a(M_j))$ for some $1 \leq i, j\leq q$, since $L_i$ and $M_j$ are at distance $2$ in $\Gamma_{n,q}(\K)$ and $\a$ is an isomorphism. Using that $\a(L_i)$ meets $\a(M_j)$ in a point for all $1\leq i,j\leq q$, we see that the sets $\a(L_1),\ldots,\a(L_q)$ and $\a(M_1),\ldots,\a(M_q)$ are each contained in a regulus of a hyperbolic quadric $Q^+(3,q)$. However, a line through $P_3$ is mapped to a line that contains $q$ points of this quadric, but is not contained in a regulus of this quadric, a contradiction, hence $P_1 P_2$ is rigid.

Now suppose that the line $N$ does not contain an extra point of $\K$, then we now know that either $P_1P_2$ is rigid, or the lines $\a(L_1),\ldots,\a(L_q)$ and $\a(M_1),\ldots,\a(M_q)$ are each contained in a regulus of a hyperbolic quadric ${Q}^+(3,q)$. This quadric meets $H_\infty$ in a plane $\pi$ containing two lines $N_1,N_2$ (the remaining lines in each of these reguli) and the $2q$ points of $N_1$ and $N_2$, different from their intersection point, are necessarily points of $\K'$.

By our assumption, this is either not possible or there exists a point $P \in \K'$ on $\pi$, not on the lines $N_1$ and $N_2$. Consider a line $T$ through $P$ intersecting $N_1$ in a point $S_1$ and intersecting the line $N_2$ in a point $S_2$, for which $S_1 \neq S_2$. Consider the plane $\eta$ through $T$, different from $\pi$, intersecting the quadric $Q^{+}(3,q)$ in two lines, one line through $S_1$ and the other through $S_2$. By the first part, $T$ is rigid for any isomorphism from $T_n^*(\K')$ to $T_n^*(\K)$, hence the plane $\eta$ is mapped by $\alpha^{-1}$ to the plane $\pi$, so the lines through $S_1$, $S_2$ respectively, are mapped to the lines through $P_1$, $P_2$ respectively. This is a contradiction since the lines of the reguli of the quadric are already mapped by $\alpha^{-1}$ to the lines through $P_1$ and $P_2$ in $\pi$. It follows that $P_1P_2$ is rigid.
\end{proof}

\begin{remark}
For $q=2$, an element of $\L$ contains only two points of $\P$. If $\K=H_\infty$, clearly a permutation of $\P$ will always induce an automorphism of $T_n^*(\K)$. Moreover, consider the linear representation $T_2^*(\K)$ where $q=2$; we checked by computer that whatever point set $\K$ you take in $H_\infty=\PG(2,2)$, the full automorphism group of $T_2^*(\K)$ will always be larger than the automorphism group induced by collineations of $\PG(3,2)$.
\end{remark}

From now on, in the remaining of this section, we {\bf assume} that for all point sets $\K$ of $H_\infty$ there is no plane of $H_\infty$ intersecting $\K$ only in two intersecting lines, or in two intersecting lines minus their intersection point. Sometimes we will explicitly refer to this as the set $\K$ having {\em \bf Property (\texttt{*})}. We also assume that the point set $\K$ spans $H_\infty$.

\begin{lemma}\label{crucial} We can define a mapping $\tilde{\a}$ on the set of rigid points by putting $\tilde{\a}(Q)=\infty(\a(L))$ where
$Q$ is a rigid point and $L$ is any line for which $\infty(L)=Q$.
\end{lemma}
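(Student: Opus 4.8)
The plan is to prove that the point $\infty(\a(L))$ does not depend on the choice of line $L$ with $\infty(L)=Q$. Throughout, for a rigid point $Q$ and an affine line $L$ with $\infty(L)=Q$, write $\bar L$ for the line spanned by $\{\a(P)\mid P\in L,\ P\notin H_\infty\}$; this is indeed a line precisely because $Q$ is rigid, and it coincides with $\a(L)$ when $L\in\L$. Fix two affine lines $L_1,L_2$ with $\infty(L_1)=\infty(L_2)=Q$. Since $\a$ is injective on $\P$ and $L_1\cap L_2=\{Q\}\subseteq H_\infty$, the lines $\bar L_1$ and $\bar L_2$ have no affine point in common. Hence it suffices to show that $\bar L_1$ and $\bar L_2$ meet at all: their intersection point is then forced to be at infinity, so it equals $\infty(\bar L_1)=\infty(\bar L_2)$, and well-definedness follows.

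The main tool I would establish is the following planar statement: \emph{if $\mu$ is a plane, not contained in $H_\infty$, containing a rigid line $R$ with $Q\in R$, then the point $\infty(\bar L)$ is the same for all affine lines $L\subseteq\mu$ with $\infty(L)=Q$.} Indeed, rigidity of $R$ forces $\{\a(P)\mid P\in\mu,\ P\notin H_\infty\}$ to span a plane $\bar\mu$, and rigidity of $Q$ forces every such $\bar L$ to be a line; each of these lines lies in $\bar\mu$, and, as above, two distinct ones never share an affine point. Since any two lines of the projective plane $\bar\mu$ meet, two such lines $\bar L,\bar L'$ meet at infinity, so $\infty(\bar L)=\infty(\bar L')$, and transitivity over the (at most $q$) lines through $Q$ in $\mu$ gives the claim. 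Note also that, as soon as there is a single rigid line $R$ through $Q$, \emph{every} affine line $L$ with $\infty(L)=Q$ lies in such a plane, namely $\mu=\langle R,L\rangle$, which meets $H_\infty$ exactly in $R$.

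It then remains to connect $L_1$ and $L_2$ by a chain of planes of the above type. Projecting $\PG(n+1,q)$ from $Q$ identifies the affine lines with vertex $Q$ with the points of an affine space $\AG(n,q)$, the lines of $H_\infty$ through $Q$ playing the role of its directions; under this identification, the affine lines with vertex $Q$ lying in a fixed plane through a rigid line $R\ni Q$ form a line of $\AG(n,q)$ in the direction of $R$, and the planar statement says that $L\mapsto\infty(\bar L)$ is constant along each such ``rigid direction''. Hence $L\mapsto\infty(\bar L)$ is constant outright as soon as the rigid lines through $Q$ realise a spanning set of directions of $\AG(n,q)$: one then joins $L_1$ to $L_2$ by a path whose consecutive members differ in a rigid direction and applies the planar statement step by step. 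When $Q\in\K$ this hypothesis is met, since the lines $\langle Q,P\rangle$ with $P\in\K\setminus\{Q\}$ are rigid by Lemma~\ref{help} and their directions span $H_\infty/Q$ because $\K$ spans $H_\infty$.

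The step I expect to be the real obstacle is guaranteeing enough rigid lines through a rigid point $Q\notin\K$ — equivalently, handling a plane $\pi$ through $Q$ whose line at infinity is not (yet) known to be rigid, so that the images of the affine points of $\pi$ need not be coplanar and the planar statement above does not apply. I would attack this by contradiction, in the spirit of the proof of Lemma~\ref{help}: assuming $\infty(\bar L_1)\ne\infty(\bar L_2)$, the lines $\bar L_1,\bar L_2$ are skew; passing to the $3$-space spanned by $\pi=\langle L_1,L_2\rangle$ and a suitable point of $\K$, the images of the lines through $Q$ in $\pi$, together with the images of a pencil of lines of $\L$ through a point of $\K$, form two families of $q$ lines, each line of one family meeting each line of the other in an affine point while the lines within each family are pairwise disjoint; this forces these lines onto the two reguli of a hyperbolic quadric $Q^+(3,q)$, whose trace on $H_\infty$ must then carry all the relevant points at infinity, contradicting Property~(\texttt{*}) — or, when that trace is a conic, containing more than $q+1$ points once $q>2$ — exactly as in the closing argument of Lemma~\ref{help}.
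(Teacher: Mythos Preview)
Your treatment of the case $Q\in\K$ is essentially the paper's argument, rephrased through the quotient $H_\infty/Q$: your planar statement is exactly what the paper extracts from the rigidity of $P_1P_2$ (Lemma~\ref{help}), and your chain-of-planes reduction is the paper's induction on the dimension of $\langle P_1,\dots,P_k\rangle$.

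For $Q\notin\K$, however, you have taken a detour. You correctly observe that your planar statement needs a rigid line through $Q$, and you propose to rescue the case where no such line is available by a hyperbolic-quadric contradiction. That sketch is not clearly workable as stated: if the plane $\pi=\langle L_1,L_2\rangle$ meets $H_\infty$ in a line disjoint from $\K$, then no point of $\K$ lies in $\pi$, so a line through a point $P\in\K$ meets at most one of the $q$ lines through $Q$ in $\pi$, and the two mutually transversal families you need for a regulus simply do not arise.

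The paper sidesteps the issue with a much simpler observation, which in fact is a slight weakening of your own planar statement: to force $\bar L$ and $\bar M$ into a common plane you do not need a rigid \emph{line} through $Q$ at infinity of that plane --- a single point $P\in\K$ on that line at infinity, together with the \emph{already established} well-definedness of $\tilde\a$ at $P$, is enough. Concretely, given $L$ through $Q$ and any $P\in\K$, set $\pi=\langle L,P\rangle$. Two affine lines $N_1,N_2\in\L$ through $P$ in $\pi$ satisfy $\infty(\a(N_1))=\infty(\a(N_2))$ by the first part, so $\a(N_1)$ and $\a(N_2)$ span a plane $\sigma$. Any line $M$ through $Q$ in $\pi$ meets $N_1$ and $N_2$ in affine points, so $\bar M$ (a line, since $Q$ is rigid) meets $\a(N_1)$ and $\a(N_2)$ and hence lies in $\sigma$. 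Thus $\bar L$ and $\bar M$ are coplanar, share no affine point, and $\infty(\bar L)=\infty(\bar M)$. Now the very chain argument you already set up (replacing ``rigid directions through $Q$'' by ``directions $QP$ with $P\in\K$'', which still span $H_\infty/Q$) connects any two lines through $Q$. No quadric, and no rigid line through $Q$, is needed.
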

\begin{proof}
We have to show that this mapping is well-defined, hence,
if $Q$ is a rigid point and $L_1$ and $L_2$ are two lines through $Q$,
not contained in $H_\infty$, then we will show that $\infty(\a(L_1))=\infty(\a(L_2))$.

We first show that the mapping is well-defined for points of $\K$. Consider a line $L_1$ through $P_1$ contained in a $k$-space through a $(k-1)$-space $\langle P_1,\ldots,P_k\rangle$, for points $P_i\in \K$. Consider the case $k=2$, then by Lemma \ref{help}, we know that $P_1P_2$ is rigid. Let $\pi$ be the plane $\langle L_1, P_2 \rangle$, this plane intersects $H_\infty$ in the line $P_1 P_2$. The lines of $\pi$ through $P_1$ partition the affine points of $\pi$, and $\a$ is an isomorphism mapping lines of $\pi$ through $P_1$ onto lines, hence $\infty(\a(L))=\infty(\a(L_1))$ is the same point for all lines $L$ in $\pi$ through the point $P_1$, different from $P_1 P_2$.

Now we proceed by induction. Suppose that for every line $L \in \L$ through
$P_1$, contained in a $k$-space through a $(k-1)$-space $\langle P_1,\ldots,P_k\rangle$, $P_i\in \K$,
$k\leq n$, we have $\infty(\a(L))=\infty(\a(L_1))$. Consider a point $P_{k+1} \in \K$ not in $\langle P_1, \ldots, P_k\rangle$, and let $M \in \L$ be any line
through $P_1$, contained in a $(k+1)$-space through the $k$-space $\langle P_1,\ldots,P_{k+1}\rangle$. The
plane $\langle M,P_{k+1}\rangle$ meets the $(k-1)$-space $\langle P_1,\ldots,P_k\rangle$
in a line $N$ for which $\infty(N)=P_1$. By the induction hypothesis, $\infty(\a(N))
=\infty(\a(L_1))$. Moreover, from the case $k=2$, we know that for the
line $M$, it holds that $\infty(\a(M))=\infty(\a(N))$.
Hence, using that the points of $\K$ span $H_\infty$, we have shown
that the theorem is valid for all points of $\K$.

Now suppose that $Q$ is a rigid point not contained in $\K$. Let $L$ be
a line through $Q$. Consider a point $P_i \in \K$ and the plane $\pi = \langle L,P_i\rangle$; let $M$ be a line through $Q$ in $\pi$ different from $L$. Let $N_j$, $j=1,2$, be two lines
in $\pi$, going through the point $P_i$. We have shown in the previous
part that $\infty(\a(N_1))=\infty(\a(N_2))$.  It
follows that $\a(L)$ and $\a(M)$ lie in a plane, hence, they meet in a
point. Since $\a$ is an automorphism and $L$ and $M$ only meet in a
point at infinity, $\a(L)$ and $\a(M)$ cannot meet in an affine point,
hence $\infty(\a(L))=\infty(\a(M))$. The lemma now follows by induction with the same argument as used above
for points of $\K$.
\end{proof}

The previous lemma shows that, an isomorphism $\a$ between $\Gamma_{n,q}(\K)$ and $\Gamma_{n,q}(\K')$ preserving points can be extended to the rigid points $Q \in H_\infty$ by putting $\a(Q):=\tilde{\a}(Q)$.

\begin{lemma} \label{opspanning}If $P_1,\ldots,P_{k+1}$ are rigid
points, then $\langle P_1,\ldots,P_{k+1}\rangle$ is a rigid space.
\end{lemma}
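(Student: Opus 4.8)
The plan is to prove the statement by induction on $k$, using Lemma~\ref{intersectie} (intersection of rigid subspaces is rigid) as the main engine, together with the extension $\tilde{\a}$ of $\a$ to rigid points provided by Lemma~\ref{crucial}. The base cases $k=0$ (a single rigid point) and $k=1$ (the span of two rigid points) need to be handled: $k=0$ is trivial, and for $k=1$ I would argue as follows. Let $P_1,P_2$ be rigid points and let $R$ be an affine point; the plane $\mu=\langle R,P_1,P_2\rangle$ meets $H_\infty$ in the line $P_1P_2$. Every affine point of $\mu$ lies on a line through $P_1$ and on a line through $P_2$. Since $P_1$ is rigid, the affine points on each line through $P_1$ in $\mu$ map into an affine line, and by (the argument behind) Lemma~\ref{crucial} all these image lines meet $H_\infty$ in the single point $\tilde{\a}(P_1)$; similarly the lines through $P_2$ in $\mu$ map to affine lines all through $\tilde{\a}(P_2)$. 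As $\tilde\a(P_1)\neq\tilde\a(P_2)$ (distinct rigid points have distinct images, since $\a$ is an isomorphism and two lines meeting $H_\infty$ in different points are not at distance $2$), these two pencils of image lines force the images of the affine points of $\mu$ into the plane $\langle \tilde\a(P_1)\tilde\a(P_2), \a(R)\rangle$ rather than a higher-dimensional space, and they cannot all lie on a single line; hence they span a plane, so $P_1P_2$ is rigid.

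For the inductive step, assume the result for all spans of at most $k$ rigid points and let $P_1,\dots,P_{k+1}$ be rigid. Set $\sigma_1=\langle P_1,\dots,P_k\rangle$ and $\sigma_2=\langle P_2,\dots,P_{k+1}\rangle$, both rigid of dimension $k-1$ by the induction hypothesis, and also $\tau=\langle P_1,\dots,P_{k-1},P_{k+1}\rangle$, rigid of dimension $k-1$. The span $\langle P_1,\dots,P_{k+1}\rangle$ is the join $\langle\sigma_1,P_{k+1}\rangle$. To show it is rigid, pick an affine point $R$ and consider $\pi=\langle R,\sigma_1\rangle$, a $k$-space, whose affine points map onto the affine points of a $k$-space $\mu_1$ (rigidity of $\sigma_1$); similarly $\langle R,\sigma_2\rangle$ maps onto a $k$-space $\mu_2$ and $\langle R,\tau\rangle$ onto a $k$-space $\mu_3$. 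The key point is that $\langle R, P_1,\dots,P_{k+1}\rangle$ is covered by $k$-spaces of the form $\langle R', \sigma_1\rangle$ as $R'$ ranges over the affine points of $\langle R,\sigma_1,P_{k+1}\rangle$ — more usefully, every affine point of this $(k+1)$-space lies on a line through $P_{k+1}$ meeting $\pi=\langle R,\sigma_1\rangle$ in an affine point. Since $P_{k+1}$ is rigid, each such line maps to an affine line through $\tilde\a(P_{k+1})$; since $\sigma_1$ is rigid, the meeting points map into $\mu_1$. Thus the image of the affine points of $\langle R,\sigma_1,P_{k+1}\rangle$ lies in $\langle \mu_1, \tilde\a(P_{k+1})\rangle$. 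It remains to check $\tilde\a(P_{k+1})\notin\mu_1$, equivalently $\tilde\a(P_{k+1})$ is not in the span of $\tilde\a(P_1),\dots,\tilde\a(P_k)$ inside $H_\infty$: this follows because $\tilde\a$ restricted to $\sigma_1$ agrees with the geometric behaviour forced by rigidity (the $(k-1)$-space $\sigma_1$ has image spanning a $(k-1)$-space, namely $\langle\tilde\a(P_1),\dots,\tilde\a(P_k)\rangle$), and $P_{k+1}\notin\sigma_1$ together with injectivity of $\tilde\a$ on $\overline{\K}$-type configurations; alternatively, if $\tilde\a(P_{k+1})$ did lie in $\mu_1\cap H_\infty$, one produces two lines at infinite distance in $\Gamma_{n,q}(\K)$ whose images are at distance $2$, contradicting that $\a$ is an isomorphism. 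Hence the image spans exactly a $(k+1)$-space, so $\langle P_1,\dots,P_{k+1}\rangle$ is rigid.

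The main obstacle I anticipate is the bookkeeping around $\tilde\a$ and the dimension count: one must be careful that the extension $\tilde\a$ on the rigid points really does behave like a collineation on the configuration at hand (sending a $j$-dimensional span of rigid points to a $j$-dimensional span), since that is essentially what ``rigid'' is being bootstrapped to mean, and there is a genuine risk of circularity if one invokes the conclusion prematurely. The clean way around this is to phrase the induction so that at stage $k$ one simultaneously proves (a) $\langle P_1,\dots,P_{k+1}\rangle$ is rigid and (b) $\tilde\a$ maps it bijectively onto the $(k+1)$-dimensional space $\langle\tilde\a(P_1),\dots,\tilde\a(P_{k+1})\rangle$, using Lemma~\ref{intersectie} to pass from overlapping lower-dimensional rigid subspaces to their spans and intersections consistently. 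A secondary subtlety is that Lemma~\ref{help} (which underlies the $k=1$ case) requires Property~(\texttt{*}) and $q>2$; since we have globally assumed Property~(\texttt{*}) and $\K$ spanning $H_\infty$, and since $q>2$ is in force throughout this section, these hypotheses are available, but the proof should explicitly invoke them where the line-case is used.
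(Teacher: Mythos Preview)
Your approach is essentially the paper's: induction on $k$, with the inductive step handled exactly as in the paper by projecting from the extra rigid point $P_{k+1}$ onto the already-rigid $\langle R,\sigma_1\rangle$ via Lemma~\ref{crucial}. Your dimension-count worry (that $\tilde\a(P_{k+1})$ must fall outside $\mu_1\cap H_\infty$) is legitimate and in fact more carefully stated than in the paper, which silently assumes it.

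Two remarks. First, in your $k=1$ case the sentence ``these two pencils of image lines force the images of the affine points of $\mu$ into the plane $\langle\tilde\a(P_1)\tilde\a(P_2),\a(R)\rangle$'' is not yet justified: knowing only that each image point lies on \emph{some} line through $\tilde\a(P_1)$ and \emph{some} line through $\tilde\a(P_2)$ does not pin down a plane. You need the explicit chaining the paper performs: for an arbitrary affine $S\in\mu$, the line $SP_1$ meets $RP_2$ in an affine point $T_1$, so $\a(T_1)\in\langle\a(R),\tilde\a(P_2)\rangle$ and hence $\a(S)\in\langle\a(T_1),\tilde\a(P_1)\rangle\subset\langle\a(R),\tilde\a(P_1),\tilde\a(P_2)\rangle$. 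Second, the subspaces $\sigma_2,\tau$ and the reference to Lemma~\ref{intersectie} as ``the main engine'' are never actually used in your argument (nor in the paper's); the proof runs entirely on Lemma~\ref{crucial}, so you can drop that scaffolding.
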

\begin{proof} We proceed by induction on the dimension $k$ of the rigid
space.
Let $k=1$, and let $\pi$ be a plane meeting $H_\infty$ in the
line $P_1P_2$, and let $R$ be an affine point of $\pi$. By Lemma \ref{crucial}, since
$P_1$ and $P_2$ are rigid, the points on the
line $RP_i$, $i=1,2$, are mapped by $\a$ to the points on the line $\langle \a(R), \a(P_i) \rangle$. Let $S\neq
R$ be a point of $\pi$, not on $P_1P_2$, $R P_1$ or $R P_2$ and let $T_1$ (resp. $T_2$) be
the intersection point of the line $SP_1$ (resp. $SP_2$) with $RP_2$
(resp. $RP_1$), then $\a(T_1)$ lies on $\langle \a(R),\a(P_2)\rangle$
and $\a(T_2)$ lies on $\langle \a(R),\a(P_1)\rangle$. It follows from
Lemma \ref{crucial} that $\a(S)$ lies on $\langle \a(T_1),
\a(P_1)\rangle$ and $\langle \a(T_2), \a(P_2)\rangle$, hence, $\a(S)$
is contained in the plane $\langle \a(R),\a(P_1),\a(P_2)\rangle$. It follows that
$P_1P_2$ is rigid.

Now suppose that $\pi:=\langle P_1,\ldots,P_{t-1}\rangle$, with all $P_i$ rigid, is a rigid space of dimension $k\leq n-1$. Let $P_t$ be a
rigid point, different from $P_1, \ldots, P_{t-1}$. If $P_t \in \pi$, the space
$\pi=\langle \pi, P_t \rangle$ is rigid. Suppose $P_t \notin \pi$, then $\mu:=\langle \pi, P_t\rangle$ is a space of dimension $k+1$. Let
$R$ be an affine point of $\PG(n+1,q)$, then, since $\pi$ is rigid and by
Lemma \ref{crucial}, we have that every affine point of $\langle
R,\pi\rangle$ is mapped by $\a$ to an affine point of $\langle
\a(R),\a(P_1),\ldots,\a(P_{t-1})\rangle$. Let $S$ be a point of
$\langle R,\mu\rangle$, not in $\langle R,\pi\rangle$. If $S$ lies on
$RP_t$, then $\a(S)$ is certainly contained in $\langle
\a(R),\a(P_1),\ldots,\a(P_{t-1}),\a(P_t)\rangle$, hence, we may
suppose that $S$ does not lie on $RP_t$. The line
$SP_t$ meets $\langle R,\pi\rangle$ in a point $T$. As
$\a(T)$ lies in  $\langle
\a(R),\a(P_1),\ldots,\a(P_{t-1})\rangle$ and $\a(S)$ lies on the
line through $\a(P_t)$ and $\a(T)$ by Lemma \ref{crucial}, $\a(S)$ lies
in $\langle \a(R),\a(P_1),\ldots,\a(P_{t-1}),\a(P_t)\rangle$. This
proves our lemma.
\end{proof}

\begin{theorem}Let $q>2$. Let $\K$ and $\K'$ denote point sets having Property (\texttt{*}) in $H_\infty=\PG(n,q)$ such that the closure $\overline{\K}$ is equal to $H_\infty$, and such that, if $\vert \K\vert=q$ every point of $H_\infty$ lies on at least one tangent line to $\K$. Let $\alpha$ be an isomorphism between $\Gamma_{n,q}(\K)$ and $\Gamma_{n,q}(\K')$. Then $\alpha$ is induced by an element of stabiliser $\PGammaL(n+2,q)_{H_\infty}$ mapping $\K$ onto $\K'$.
\end{theorem}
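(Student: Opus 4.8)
The plan is to combine Lemma~\ref{opspanning} with the closure construction to force every point of $H_\infty$ to be rigid, and then to reconstruct the inducing collineation from the map $\tilde{\a}$. First I would invoke Corollary~\ref{cor22}: either $|\K|\neq q$, or $\K$ satisfies the tangent-line hypothesis, so in both cases $\a$ stabilises $\P$, and hence all the machinery of Section~\ref{sec3} (rigid subspaces, the map $\tilde{\a}$) applies. The points of $\K$ are rigid by definition, and $\K$ spans $H_\infty$; I would then run the closure recursion from Definition~\ref{closure} in parallel with the rigidity results. At each stage, the set $\A$ of spans of rigid points consists of rigid subspaces by Lemma~\ref{opspanning}, and any point arising as the \emph{exact} intersection of two such spans is rigid by Lemma~\ref{intersectie}. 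Thus every point added during the construction of $\overline{\K}$ is rigid; since $\overline{\K}=H_\infty$, every point of $H_\infty$ is rigid. (A small technical check here: the closure is defined for sets containing a frame, and one must observe that a spanning set with Property~(\texttt{*}) does contain a frame, or at least that the recursion still reaches all of $H_\infty$; I would address this briefly.)

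Once every point of $H_\infty$ is rigid, Lemma~\ref{crucial} gives a well-defined map $\tilde{\a}\colon H_\infty\to H_\infty$ with $\tilde{\a}(\infty(L))=\infty(\a(L))$ for every affine line $L$, and we extend $\a$ to all of $\PG(n+1,q)$ by setting $\a(Q):=\tilde{\a}(Q)$ for $Q\in H_\infty$. The next step is to show this extended map is a collineation of $\PG(n+1,q)$. I would do this via the fundamental theorem of projective geometry: it suffices to show $\a$ (so extended) maps lines to lines and preserves incidence. For an affine line this is immediate. For a line $L\subseteq H_\infty$, pick two points of $L$; they are rigid, so by Lemma~\ref{opspanning} the line $L$ is rigid, which by definition means that for any plane $\pi\not\subseteq H_\infty$ through $L$ the affine points of $\pi$ map onto the affine points of a plane $\pi'$; one checks $\pi'\cap H_\infty$ is a line and, using Lemma~\ref{crucial} applied to the lines of $\pi$ meeting $L$, that $\tilde{\a}(L)=\pi'\cap H_\infty$, so $L$ maps to a line. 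Incidence between a point and a line, and between $H_\infty$ and affine objects, follows the same way. Hence $\a$ is induced by some $\varphi\in\PGammaL(n+2,q)$; since $\a$ stabilises $\P$, the collineation $\varphi$ fixes $H_\infty$, i.e.\ $\varphi\in\PGammaL(n+2,q)_{H_\infty}$.

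Finally I would verify that $\varphi$ sends $\K$ to $\K'$. A point $R\in H_\infty$ lies in $\K$ iff some affine line $L$ with $\infty(L)=R$ is a vertex of $\Gamma_{n,q}(\K)$, i.e.\ iff the affine points on such an $L$ have pairwise distance $2$; since $\a$ is a graph isomorphism this property is preserved, so $R\in\K \iff \tilde{\a}(R)\in\K'$, giving $\varphi(\K)=\K'$. The main obstacle, I expect, is the bookkeeping around the closure: making precise that the rigid points accumulated by Lemmas~\ref{intersectie} and~\ref{opspanning} exhaust exactly the points produced by the closure recursion (including checking that spans of rigid points give \emph{all} of $\A$ at each stage, and that "exact intersection" in step~(ii) is what Lemma~\ref{intersectie} delivers). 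The passage from "every point of $H_\infty$ is rigid" to "$\a$ is a genuine collineation" is then largely the fundamental theorem of projective geometry together with Lemma~\ref{crucial}, and the identification of $\K'$ is a short distance-preservation argument.
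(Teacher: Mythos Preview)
Your proposal is correct and follows essentially the same route as the paper's proof: invoke Corollary~\ref{cor22} to get $\alpha(\P)=\P$, use Lemmas~\ref{intersectie} and~\ref{opspanning} along the closure recursion to make every point of $H_\infty$ rigid, extend $\alpha$ via $\tilde{\alpha}$, and then check collinearity is preserved both for affine lines and for lines in $H_\infty$ (the latter by taking a plane through the rigid line). Your write-up is in fact more thorough than the paper's, which leaves the verification that $\varphi(\K)=\K'$ implicit and does not discuss the frame issue; note that the hypothesis $\overline{\K}=H_\infty$ already presupposes that $\K$ contains a frame, so that concern dissolves.
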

\begin{proof} By Corollary \ref{cor22}, every isomorphism between $\Gamma_{n,q}(\K)$ and $\Gamma_{n,q}(\K')$ maps points onto points. Hence, the concept `rigid' is well-defined. It follows from the construction of
the closure of a set and from Lemmas \ref{intersectie} and \ref{opspanning}, that all points in $\overline{\K}=H_\infty$ are rigid. Hence $\alpha$ maps collinear points of a line, not in $H_\infty$, onto collinear points.

As announced before, we abuse notation and use $\a$ for the extension of $\a$ to all points of $\PG(n+1,q)$. Let $P,Q,R$ be three points on a line $L$ contained in $H_\infty$ and let $S$ be a point, not contained in $H_\infty$. Since $L$ is a rigid line, $\a$ maps the points of $\langle S,L\rangle$ onto points of a plane
containing $\a(P), \a(Q)$, and $\a(R)$ at infinity. This implies that $\a$ also maps collinear points of $H_\infty$ to collinear points of $H_\infty$, hence, is a collineation that stabilises ${H_\infty}$.
\end{proof}

If $\overline{\K}$ is $H_\infty$, clearly $\K$ spans $H_\infty$ and hence, $(\PGammaL(n+2,q)_{H_\infty})_\K=\PGammaL(n+2,q)_\K$. Taking this into account, the previous theorem has the following corollary.

\begin{corollary} \label{hoofd1}Let $q>2$. Let $\K$ denote a point set having Property (\texttt{*}) in $H_\infty=\PG(n,q)$ such that the closure $\overline{\K}$ is equal to $H_\infty$, and such that if $\vert \K\vert=q$, every point of $H_\infty$ lies on at least one tangent line to $\K$. Then $\Aut(\Gamma_{n,q}(\K)) \cong \PGammaL(n+2,q)_\K$.
\end{corollary}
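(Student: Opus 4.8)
The plan is to derive Corollary~\ref{hoofd1} from the preceding theorem by specialising to the case $\K'=\K$ and $\alpha$ an arbitrary automorphism of $\Gamma_{n,q}(\K)$. First I would note that the hypotheses of Corollary~\ref{hoofd1} are exactly the hypotheses of the theorem with $\K'=\K$: Property~(\texttt{*}) holds, $\overline{\K}=H_\infty$, and (when $|\K|=q$) every point of $H_\infty$ lies on a tangent line to $\K$. Hence, by the theorem, \emph{every} automorphism $\alpha$ of $\Gamma_{n,q}(\K)$ is induced by some element $\varphi_\alpha$ of $\PGammaL(n+2,q)_{H_\infty}$ which, since it maps $\K$ onto $\K'=\K$, actually lies in $\PGammaL(n+2,q)_\K$ (using the remark just before the corollary, that $\overline\K=H_\infty$ forces $\K$ to span $H_\infty$, so the stabiliser of $\K$ inside the stabiliser of $H_\infty$ is the full stabiliser of $\K$). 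This already gives a surjective map $\PGammaL(n+2,q)_\K \to \Aut(\Gamma_{n,q}(\K))$, $\varphi\mapsto \varphi|_{\P\cup\L}$.

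Next I would check this restriction map is a group homomorphism: composition of collineations restricts to composition of the induced graph automorphisms, and the identity collineation restricts to the identity automorphism, so this is routine. The remaining point — and the only place any real argument is needed — is injectivity, i.e.\ that the kernel is trivial: if $\varphi\in\PGammaL(n+2,q)_\K$ fixes every affine point of $\PG(n+1,q)$ (and every line of $\L$), then $\varphi$ is the identity collineation. For this I would argue that a collineation of $\PG(n+1,q)$ fixing every point of the affine part $\PG(n+1,q)\setminus H_\infty$ must be the identity: the affine points already contain a frame of $\PG(n+1,q)$ (indeed one can pick $n+2$ affine points in general position together with enough further affine points to pin down the field automorphism), so a collineation fixing all of them pointwise is the identity of $\PGammaL(n+2,q)$. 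Hence the kernel is trivial and the restriction map is an isomorphism $\PGammaL(n+2,q)_\K \cong \Aut(\Gamma_{n,q}(\K))$.

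The main obstacle, such as it is, is purely bookkeeping: one must make sure that the theorem is being invoked with $\K'=\K$ and that ``induced by an element of $\PGammaL(n+2,q)_{H_\infty}$ mapping $\K$ onto $\K'$'' genuinely lands inside $\PGammaL(n+2,q)_\K$, which is precisely what the sentence preceding the corollary records. There is no deep step; the content of the corollary is entirely contained in the theorem together with the elementary fact that the affine points of $\PG(n+1,q)$ carry enough structure to rigidify a collineation. I would therefore keep the proof to two or three lines: apply the theorem with $\K'=\K$ to get that every automorphism is geometric and the induced collineation stabilises $\K$; observe the restriction map to $\Aut(\Gamma_{n,q}(\K))$ is a homomorphism, surjective by the theorem and injective because a collineation of $\PG(n+1,q)$ fixing all affine points is trivial; conclude $\Aut(\Gamma_{n,q}(\K))\cong\PGammaL(n+2,q)_\K$.
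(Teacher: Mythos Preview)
Your proposal is correct and follows exactly the paper's approach: the paper simply records that $\overline{\K}=H_\infty$ forces $\K$ to span $H_\infty$, so $(\PGammaL(n+2,q)_{H_\infty})_\K=\PGammaL(n+2,q)_\K$, and then states the corollary as an immediate consequence of the preceding theorem with $\K'=\K$. Your extra care in spelling out surjectivity, the homomorphism property, and injectivity of the restriction map is fine but is taken as obvious in the paper.
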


Since an isomorphism between incidence structures $T_n^*(\K)$ and $T_n^*(\K')$  {\em by definition} maps points to points, the assumption that every point of $H_\infty$ lies on a tangent line to $\K$ and $\K'$ (showing that points and lines cannot be mapped onto each other) can be dropped, and we have the following corollary.

\begin{corollary}\label{hoofd} Let $q>2$. Let $\K$ and $\K'$ denote point sets having Property (\texttt{*}) in $H_\infty=\PG(n,q)$ such that the closure $\overline{\K}$ is equal to $H_\infty$ and let $\alpha$ be an isomorphism between $T_n^*(\K)$ and $T_n^*(\K')$. Then $\alpha$ is induced by an element of $\PGammaL(n+2,q)_{H_\infty}$ mapping $\K$ to $\K'$.
\end{corollary}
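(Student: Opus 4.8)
The plan is to obtain this as a near-immediate consequence of the preceding theorem, the only subtlety being that the tangent-line hypothesis of that theorem can be discarded in the incidence-structure setting. By definition, an isomorphism $\alpha$ of the incidence structure $T_n^*(\K)$ onto $T_n^*(\K')$ sends points to points and lines to lines; so, regarding $\alpha$ as an isomorphism of the incidence graphs $\Gamma_{n,q}(\K)$ and $\Gamma_{n,q}(\K')$, it stabilises $\P$ for free. This is precisely the conclusion of Corollary~\ref{cor22}, but it now holds with no assumption beyond Property~(\texttt{*}) and $\overline{\K}=H_\infty$; in particular the hypothesis ``if $|\K|=q$, every point of $H_\infty$ lies on a tangent to $\K$'' --- which in the theorem enters only through Corollary~\ref{cor22} to exclude point$\leftrightarrow$line swaps --- is no longer needed. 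Hence the notion of $\alpha$-rigid subspace is well-defined and Lemmas~\ref{intersectie}, \ref{help}, \ref{crucial} and \ref{opspanning} apply as stated.

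Next I would show that \emph{every} point of $H_\infty$ is $\alpha$-rigid. Every point of $\K$ is rigid, directly from the definition of $\Gamma_{n,q}(\K)$. Now run the recursive construction of the closure $\overline{\K}$ from Definition~\ref{closure}: at each stage, any subspace spanned by rigid points is rigid by Lemma~\ref{opspanning}, and any point occurring as the exact intersection of two such subspaces is rigid by Lemma~\ref{intersectie}. Since this process terminates with $\overline{\K}=H_\infty$, all points of $H_\infty$ are rigid.

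Finally I would repeat the last paragraph of the theorem's proof. Because all points of $H_\infty$ are rigid, $\alpha$ maps any three collinear affine points (lying on a common line of $\PG(n+1,q)$ not contained in $H_\infty$) to collinear points, and by Lemma~\ref{crucial} it extends consistently to all points of $\PG(n+1,q)$. For a line $L\subseteq H_\infty$ and an affine point $S$, the plane $\langle S,L\rangle$ is rigid, so its affine points --- together with the three chosen points of $L$ --- are mapped into a plane; hence $\alpha$ also carries collinear points of $H_\infty$ to collinear points of $H_\infty$. Therefore $\alpha$ is a collineation of $\PG(n+1,q)$ stabilising $H_\infty$, i.e. an element of $\PGammaL(n+2,q)_{H_\infty}$, and since it maps lines of $T_n^*(\K)$ to lines of $T_n^*(\K')$ it sends $\K$ onto $\K'$.

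The ``hard part'' here is really only a bookkeeping check: one must verify that the tangent-line condition of the theorem was invoked nowhere except inside Corollary~\ref{cor22}. Inspecting Lemma~\ref{NVT}, Lemma~\ref{help}, Lemma~\ref{crucial} and Lemma~\ref{opspanning}, one sees they rest solely on Property~(\texttt{*}), on $\K$ (and $\K'$) spanning $H_\infty$, and on $q>2$; so dropping the tangent hypothesis is legitimate and the corollary follows.
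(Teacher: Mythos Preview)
Your proposal is correct and follows exactly the paper's own route: the paper derives the corollary simply by remarking that an isomorphism of incidence structures maps points to points \emph{by definition}, so the tangent-line hypothesis (used only via Corollary~\ref{cor22}) becomes superfluous and the proof of the preceding theorem goes through verbatim. One small slip in your final paragraph: Lemma~\ref{NVT} \emph{does} use the tangent-line condition, so it should not appear in your list of lemmas that ``rest solely on Property~(\texttt{*})''; the point is rather that Lemma~\ref{NVT} is invoked only inside Corollary~\ref{cor22}, and the lemmas actually needed downstream (\ref{intersectie}, \ref{help}, \ref{crucial}, \ref{opspanning}) do not use it.
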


In the next section, we will give some examples of non-geometric automorphisms of $T_n^*(\K)$ for point sets $\K$ not satisfying the conditions of Corollary \ref{hoofd}.

\section{Non-geometric automorphisms}\label{sec4}

In Corollary \ref{hoofd}, it is assumed that $\K$ satisfies Property (\texttt{*}) and that the closure of $\K$ is $H_\infty$. It turns out that, if one of these conditions is not satisfied, there exist counterexamples to this corollary. We give explicit constructions of such counterexamples and provide computer results that give more information on $\Aut(T_n^*(\K))$ for small $n$ and $q$. All the mentioned computer results were obtained with GAP \cite{GAP}.

\subsection{Point sets not satisfying Property (\texttt{*})}
Recall that Property (\texttt{*}) states that $\K$ is a point set such that there is no plane of $H_\infty$ intersecting $\K$ only in two intersecting lines, or in two intersecting lines minus their intersection point.
Let $\K$ be the point set of two intersecting lines in $\PG(2,q)$, we will show that there exist non-geometric automorphisms of $T_2^*(\K)$, and construct the automorphism group $\Aut(T_2^*(\K))$.

From the findings in the proof of Lemma \ref{help} and the following lemmas in Section \ref{sec3} we deduce that an automorphism $\phi$ of $T_2^*(\K)$ in this case is either geometric, and thus induced by an element of $\PGammaL(4,q)_\K$, or is some non-geometric mapping of $T_2^*(\K)$. The lines of $\L$ in planes intersecting $\K$ in exactly two points are either mapped to lines in planes or to lines of hyperbolic quadrics $Q^+(3,q)$. It is easy to see that, if the lines of $\L$ in some plane that intersects $\K$ in exactly two points are mapped to the lines of a hyperbolic quadric $Q^{+}(3,q)$, then this is true for all planes that intersect $\K$ in exactly two points, just by looking at the intersection of two planes and the intersection of a plane and a hyperbolic quadric. We will construct such a mapping and show that if $\psi$ and $\psi'$ are non-geometric, then $\psi' = \chi_1 \psi \chi_2$ with $\chi_i \in \PGammaL(4,q)_\K$.

Without loss of generality, let $H_\infty$ be the plane of $\PG(3,q)$ with equation $X_0=0$, and let the set $\K$ consist of the points of two intersecting lines $L_1: X_0=X_1=0$ and $L_2: X_0=X_2=0$.
\begin{theorem} For the set of affine points $\P$ of $T_2^*(\K)$, the mapping \[\phi : \P \rightarrow \P : (1,x,y,z) \mapsto (1,x,y,z+xy)\] induces a non-geometric automorphism of $T_2^*(\K)$.
\end{theorem}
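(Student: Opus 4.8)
The plan is to verify directly that $\phi$ is a bijection of $\P$ that preserves the line set $\L$ of $T_2^*(\K)$, and then to exhibit a single line in a plane meeting $\K$ in two points whose image is not a line, which shows $\phi$ is not geometric. First I would check that $\phi$ is a bijection: it is clearly defined on every affine point, and the inverse map $(1,x,y,z)\mapsto(1,x,y,z-xy)$ undoes it, so $\phi$ is a permutation of $\P$. Next I would recall that the lines of $\L$ come in three types according to the point of $\K$ they pass through at infinity: those through a point of $L_1\setminus L_2$, those through a point of $L_2\setminus L_1$, and those through the intersection point $L_1\cap L_2=(0,0,0,1)$. It suffices to check that $\phi$ maps the set of affine points of each such line to the set of affine points of some line of $\L$.

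The key computation is to parametrise an affine line $\ell$ of $\L$ as $\{(1,x_0,y_0,z_0)+t(0,a,b,c)\mid t\in\F_q\}$ where its point at infinity $(0,a,b,c)$ lies in $\K$, and to apply $\phi$. The image of the generic point is $(1,x_0+ta,\,y_0+tb,\,z_0+tc+(x_0+ta)(y_0+tb))$. Expanding the last coordinate gives $z_0+x_0y_0 + t(c+x_0b+y_0a) + t^2 ab$. The crucial point is that for a line $\ell\in\L$ we have $(0,a,b,c)\in\K$, so either $a=0$ (line through a point of $L_1$) or $b=0$ (line through a point of $L_2$); in either case the quadratic term $t^2ab$ vanishes, and the image is the affine line through the point $(1,x_0,y_0,z_0+x_0y_0)$ with direction $(0,a,b,c+x_0b+y_0a)$. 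Finally I would check that this direction still lies in $\K$: if $a=0$ it is $(0,0,b,c+x_0b)\in L_1$, and if $b=0$ it is $(0,a,0,c+y_0a)\in L_2$. Hence $\phi(\ell)$ is again a line of $\L$ (through the \emph{same} point of $\K$ when that point is $L_1\cap L_2$, and possibly through a different point of the same line $L_i$ otherwise), so $\phi$ is an automorphism of $T_2^*(\K)$.

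It remains to show $\phi$ is non-geometric. For this I would take a plane $\pi$ meeting $\K$ in exactly two points, one on $L_1$ and one on $L_2$ but not their intersection, e.g. the plane $\langle(1,0,0,0),(0,0,1,0),(0,1,0,0)\rangle$; the three affine points $(1,0,0,0)$, $(1,1,0,0)$, $(1,0,1,0)$, $(1,1,1,0)$ show the issue, since $\phi$ fixes the first three and sends $(1,1,1,0)$ to $(1,1,1,1)$, which is not coplanar with the images of the other three. Thus $\phi$ does not preserve coplanarity of affine points and so cannot be induced by a collineation of $\PG(3,q)$. The main obstacle is purely bookkeeping: one must be careful that the quadratic term $t^2ab$ genuinely disappears, which hinges on the defining equations of $\K$ (exactly the failure of Property~(\texttt{*})), and one must confirm the image direction stays inside the two-line set $\K$ rather than merely inside $H_\infty$; both are short linear-algebra checks with no real difficulty.
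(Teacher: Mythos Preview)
Your proof is correct and follows essentially the same approach as the paper: both verify bijectivity, then check that each line of $\L$ (parametrised by a direction with $a=0$ or $b=0$) is sent to a line of $\L$, the quadratic term $t^2ab$ vanishing precisely because the direction lies in $\K=L_1\cup L_2$. Your unified parametrisation is a clean way to package what the paper does case by case.

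The only genuine difference is in the non-geometric step. The paper computes the image of an affine line through a point $(0,1,v,w)$ with $v\neq 0$ and shows it is an irreducible conic through $(0,0,0,1)$; you instead exhibit four coplanar affine points whose images are not coplanar. Both arguments are valid and short. The paper's version is slightly more informative for what follows (the conic picture feeds into the hyperbolic-quadric description used in the next theorem), while your coplanarity check is quicker if one only wants the present statement. One cosmetic slip: you write ``the three affine points'' and then list four; the argument itself is fine.
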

\begin{proof}

The map $\phi$ is clearly a bijection.

We will describe the action of $\phi$ on all lines not in $H_\infty$. Lines through $(0,0,0,1)$ not in $H_\infty$ are stabilised by $\phi$. A line $M$ of $T_2^*(\K)$ through $(0,1,0,u)$, $u \in \F_q$, such that $\langle M, L_2\rangle $ is the plane with equation $yX_0 -X_2=0$, with $y \in \F_q$, is mapped by $\phi$ to a line of $T_2^*(\K)$ through $(0,1,0,u+y)$.
A line $N$ of $T_2^*(\K)$ through $(0,0,1,u')$, $u' \in \F_q$,  such that $\langle N, L_1\rangle$ is the plane with equation $xX_0 -X_1=0$, with $x \in \F_q$, is mapped by $\phi$ to a line of $T_2^*(\K)$ through $(0,0,1,u'+x)$.
The affine points of a line through $(0,1,v,w)$, $v,w \in \F_q$, $v \neq 0$, not in $H_\infty$, are mapped by $\phi$ to the affine points of an irreducible conic containing the point $(0,0,0,1)$. More specifically, the affine points of the line $\langle (1,x,y,z),(0,1,v,w)\rangle$, $v \neq 0$, are mapped to the points of the conic with equation $(z-wx)X_0^2+vX_1^2+(w+y-vx)X_0X_1-X_0X_3=0$, different from $(0,0,0,1)$. This conic is contained in the plane $X_2 = (y-vx)X_0+vX_1$. The mapping $\phi$ induces an automorphism of $T_2^*(\K)$, but is clearly not induced by a collineation. \end{proof}

\begin{definition}
If a group $G$ has a normal subgroup $N$ and the quotient $G/N$ is isomorphic
to some group $H$, we say that $G$ is an {\em extension} of $N$ by $H$. This is written as $G = N.H$.

An extension $G = N.H$ which is a semidirect product is also called a {\em split
extension} and is denoted by $G=N \rtimes H$. This means that one can find a subgroup $\overline{H}\cong H$ in $G$ such that $G = N\overline{H}$ and
$N \cap \overline{H} = \{e_G \}$.
\end{definition}

Consider the group $S$ of non-geometric automorphisms of $T_2^*(\K)$  induced by $\{ \phi_m:\P\rightarrow\P:(1,x,y,z)\mapsto(1,x,y,z+mxy) \mid m \in \F_q \}$. It is clear that $S$ is isomorphic to $(\F_q,+)$.
\begin{theorem}
The group $\langle \PGammaL(4,q)_\K, \phi \rangle = S \rtimes \PGammaL(4,q)_\K$ is the full automorphism group of $T_2^*(\K)$ and is $q$ times larger than the geometric automorphism group $\PGammaL(4,q)_\K$.
\end{theorem}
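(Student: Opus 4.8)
The plan is to prove the statement in two stages: first that $\langle\PGammaL(4,q)_\K,\phi\rangle$ really is the internal semidirect product $S\rtimes\PGammaL(4,q)_\K$ of the asserted order, and then that this group is all of $\Aut(T_2^*(\K))$. Write $G:=\PGammaL(4,q)_\K$. Since $\K$ spans $H_\infty$, every element of $G$ fixes $H_\infty$, hence acts on $T_2^*(\K)$ by sending affine points to affine points and $\L$-lines to $\L$-lines; this action is faithful, because a collineation of $\PG(3,q)$ fixing every affine point is trivial. Thus $G\le\Aut(T_2^*(\K))$, and by the preceding theorem $\phi\in\Aut(T_2^*(\K))$ as well, so $\langle G,\phi\rangle$ makes sense inside $\Aut(T_2^*(\K))$.

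For the first stage I would check the three requirements for an internal semidirect product. First, $S\cong(\F_q,+)$ and $\phi=\phi_1\in S$: a one-line computation gives $\phi_{m_1}\phi_{m_2}=\phi_{m_1+m_2}$. Second, $S\cap G=\{\mathrm{id}\}$: for $m\neq0$ the argument of the preceding theorem applies verbatim to $\phi_m$ and shows that $\phi_m$ sends the line $\langle(1,x,y,z),(0,1,v,w)\rangle$ with $v\neq0$ to a nondegenerate conic, hence maps some line to a non-line and cannot be induced by a collineation. Third, $G$ normalises $S$: if $g\in G$ is induced by a matrix $M$ — necessarily block-triangular, fixing the functional $X_0$ and with bottom-right $3\times3$ block stabilising the pair $\{L_1,L_2\}$ — together with a field automorphism $\sigma$, then a direct conjugation shows $g\phi_m g^{-1}=\phi_{\lambda(g)\,m^{\sigma}}$ for an explicit $\lambda(g)\in\F_q^{*}$ (a ratio of entries of $M$): the $x$- and $y$-coordinates return to themselves, and since $g\,\phi_0\,g^{-1}=\mathrm{id}$ the affine part of the $z$-coordinate does too, so only the bilinear term survives, rescaled by $\lambda(g)$. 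As $m\mapsto\lambda(g)\,m^{\sigma}$ is an automorphism of $(\F_q,+)$, $S$ is normal in $\langle G,\phi\rangle$, and because $\lambda$ is onto $\F_q^{*}$ (take $M=\mathrm{diag}(1,1,1,t)$, which lies in $G$) every $\phi_m=g\phi g^{-1}$ with $\lambda(g)=m$ lies in $\langle G,\phi\rangle$; hence $S\le\langle G,\phi\rangle$, $\langle G,\phi\rangle=SG=S\rtimes G$, $|\langle G,\phi\rangle|=q\,|G|$, and in fact $\langle G,\phi\rangle=G\ \sqcup\ G\phi G$.

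For the second stage, recall from the analysis following Lemma~\ref{help} that every automorphism of $T_2^*(\K)$ either lies in $G$ (geometric) or is \emph{non-geometric}, the latter meaning that in every plane of $H_\infty$ meeting $\K$ in exactly two points the two $\L$-pencils are sent to the two reguli of a hyperbolic quadric $Q^{+}(3,q)$; this behaviour is coherent over all such planes, and each occurring quadric meets $H_\infty$ precisely in $L_1\cup L_2$. Since $G\subseteq SG$, it remains to show that every non-geometric $\psi$ lies in $SG$, and by the double-coset description above it suffices to show that the non-geometric automorphisms form a single $G$–$G$ double coset, which is then forced to be $G\phi G$ (note as a sanity check that $G\phi G$ is exactly the set of automorphisms carrying the partition of $\P$ into affine lines through $P_0=L_1\cap L_2$ to itself: $\phi$ fixes each such line and each element of $G$ permutes them). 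To finish, fix a reference plane $\pi_0$ with $\pi_0\cap\K=\{A,B\}$; by coherence a non-geometric $\psi$ is pinned down by its restriction to $\pi_0$, namely a bijection from the affine points of $\pi_0$ onto the affine points of some $Q^{+}(3,q)$ through $L_1\cup L_2$, sending the pencil at $A$ and the pencil at $B$ to the two reguli. Using the block-triangular shape of $G$ found above, together with $\langle\K\rangle=H_\infty$ and the swap element of $G$ interchanging $L_1$ and $L_2$, one checks that $G$ acts transitively on such "regulus charts"; since $\phi$ realises one of them, there are $g_1,g_2\in G$ with $g_1\psi g_2$ and $\phi$ agreeing on $\pi_0$, hence (by coherence) everywhere, so $\psi\in G\phi G\subseteq SG$.

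Putting the two stages together gives $\Aut(T_2^*(\K))=SG=S\rtimes G=\langle\PGammaL(4,q)_\K,\phi\rangle$ and $|\Aut(T_2^*(\K))|=|S|\cdot|\PGammaL(4,q)_\K|=q\,|\PGammaL(4,q)_\K|$. The conjugation computation in the third normalisation step is routine; the real difficulty is concentrated in the second stage — first in making the coherence statement precise enough that a non-geometric automorphism is genuinely reconstructed from its restriction to a single plane (this is the "easy to see" remark of the introduction to this subsection, carried out with the intersection calculus of two planes and of a plane with a hyperbolic quadric), and second in verifying that $G$ is transitive on the resulting family of hyperbolic quadrics through $L_1\cup L_2$ with labelled reguli. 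These are the places where the explicit description of $G$ and the geometry of $Q^{+}(3,q)$ really enter; once they are in hand, the double-coset claim, and hence the theorem, follows.
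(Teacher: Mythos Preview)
Your first stage is correct and actually more explicit than the paper's treatment: the paper simply asserts at the very end that $S$ has order $q$, is normal in $\langle\PGammaL(4,q)_\K,\phi\rangle$, and meets $\PGammaL(4,q)_\K$ trivially, whereas you supply the conjugation formula $g\phi_m g^{-1}=\phi_{\lambda(g)m^{\sigma}}$ that underlies normality.

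The second stage has a genuine gap. You restrict to a single reference plane $\pi_0$ and claim that once $g_1\psi g_2$ and $\phi$ ``agree on $\pi_0$'' they agree everywhere ``by coherence''. This is false. Let $\rho$ be any nontrivial homology of $\PG(3,q)$ with axis $\pi_0$ and centre $P_0=L_1\cap L_2=(0,0,0,1)$; since each $L_i$ contains both $P_0$ and a point of the axis $\pi_0\cap H_\infty$, both are fixed pointwise, so $\rho\in G$. Then $\rho\phi$ is a non-geometric automorphism equal to $\phi$ on $\pi_0$ but not equal to $\phi$. Even the preceding step overreaches: transitivity of $G$ on ``regulus charts'' (target quadric plus pencil-to-regulus labelling) would at best force $g_1\psi g_2$ and $\phi$ to have the same \emph{image} on $\pi_0$, not to coincide there pointwise.

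The paper sidesteps both issues by working with an entire pencil rather than one plane. Taking the $2$-secant $N:X_0=X_3=0$, it checks that a non-geometric $\psi$ sends the pencil $\{aX_0+X_3=0\}$ to a pencil of quadrics $\{X_0(a'X_0+X_3)+mX_1X_2=0\}$, normalises to $m=1$ by a single element of $G$, and then observes that $\phi^{-1}\psi$ carries the whole pencil of planes through $N$ into planes; the global dichotomy (either all such planes go to planes, or all go to quadrics) then forces $\phi^{-1}\psi\in G$. No pointwise matching is required. Your argument can be repaired in the same spirit: you do not need $g_1\psi g_2=\phi$ on $\pi_0$, only $g_1\psi g_2(\pi_0)=\phi(\pi_0)$ as sets, for then $\phi^{-1}g_1\psi g_2$ sends the plane $\pi_0$ to a plane and the dichotomy gives $\phi^{-1}g_1\psi g_2\in G$, hence $\psi\in G\phi G$. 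What must then actually be verified is the transitivity statement in that weaker form (that $G$ moves any quadric through $L_1\cup L_2$ to one in the $\phi$-image of an affine plane), which is precisely the content of the paper's ``one can check'' computation for the pencil through $N$.
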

\begin{proof}
Consider a non-geometric automorphism $\psi$ of $T_2^*(\K)$. We look at its action on the planes through the line $N$ with equation $X_0=X_3=0$.
One can check that the pencil of planes $\{ aX_0+X_3=0 \mid a \in \F_q\}$ is mapped by $\psi$ to the pencil of hyperbolic quadrics $\{ X_0(a'X_0+X_3)+mX_1X_2=0 \mid a' \in \F_q\}$ for some non-zero $m \in \F_q$. By multiplying with a well-chosen element of $\PGammaL(4,q)_\K$, we may say that the pencil of planes $\{ aX_0+X_3=0 \mid a \in \F_q\}$ is mapped by $\psi$ to the pencil of hyperbolic  quadrics $\{ X_0(a'X_0+X_3)+X_1X_2=0 \mid a' \in \F_q\}$. Also, by using a well-chosen collineation of $\PGammaL(4,q)_\K$ which switches the lines $L_1$ and $L_2$ if necessary, the pencils of planes $\{bX_0+X_1=0 \mid b \in \F_q\}$ and $\{cX_0+X_2=0 \mid c \in \F_q\}$ are both fixed by $\psi$.

The mapping $\phi$ maps every plane $aX_0+X_3=0$ to the hyperbolic quadric $ X_0(aX_0+X_3)+X_1X_2=0$ and fixes all the planes of the form $bX_0+X_1=0$ and $cX_0+X_1=0$. We now consider the mapping $\phi^{-1}\psi$, this map sends planes through $N$ to planes through a line of $H_\infty$,  hence, as seen before, sends all planes to planes. Thus, $\phi^{-1}\psi$ is induced by some collineation of $\PGammaL(4,q)_\K$. We see that $\psi=\chi_1 \phi \chi_2$ with $\chi_i \in \PGammaL(4,q)_\K$ and hence $\langle \PGammaL(4,q)_\K , \phi \rangle$ is the full automorphism group of $T_2^*(\K)$.

Since the group $S$ has order $q$ and is a normal subgroup of $\langle \PGammaL(4,q)_\K , \phi \rangle$ such that $S \cap \PGammaL(4,q)_\K$ is trivial, the theorem follows.
\end{proof}

For $q=3,4$, we take $\K$ to be the point set of two intersecting planes in $H_\infty=\PG(3,q)$. We see that the group $\Aut(T_3^*(\K))$ is $q^2$ times larger than $\PGammaL(5,q)_\K$. Hence, also in this case there exist automorphisms of $T_3^*(\K)$ which are not induced by a collineation of the ambient projective space.

\begin{remark}\label{rem3}  There are point sets $\K$ that do not satisfy Property (\texttt{*}) but do have the property that $\Aut(T_3^*(\K))$ consists entirely of geometric automorphisms. E.g. let $\K$ be the point set of three lines $L_1, L_2, L_3$ in $H_\infty=\PG(3,q)$ such that $L_1 \cap L_2=\emptyset$ and $L_3$ intersects $L_1$ and $L_2$, then, by going through the details of the proofs in the previous section, it is not too hard to check that $\Aut(T_3^*(\K))=\PGammaL(5,q)_\K$.\end{remark}

\subsection{Point sets $\K$ with closure different from $H_\infty$}

\subsubsection{The span of $\K$ is a proper subspace of $H_\infty$}
In this case, it is fairly easy to construct non-geometric automorphisms, as seen in the following theorem.

\begin{theorem} Let $\K$ be a set of points in $H_\infty=\PG(n,q)$, $q\geq 4$, such that $dim\langle \K\rangle\leq n-1$, then there exist non-geometric automorphisms of $T_n^*(\K)$.
\end{theorem}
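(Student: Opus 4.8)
The idea is to reduce to the case already understood: we have a point set $\K$ in $H_\infty = \PG(n,q)$ with $\dim\langle\K\rangle = m \leq n-1$. Pick a hyperplane $\Sigma$ of $H_\infty$ containing $\langle\K\rangle$ (so $\dim\Sigma = n-1 \geq m$), and let $\Pi = \langle\Sigma, R\rangle$ be the hyperplane of $\PG(n+1,q)$ spanned by $\Sigma$ together with one affine point $R$; thus $\Pi$ is an $n$-dimensional subspace of $\PG(n+1,q)$ meeting $H_\infty$ exactly in $\Sigma$, and every line of $\L$ lies in a unique translate of $\Pi$ in the parallel class of hyperplanes through $\Sigma$. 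More precisely, the hyperplanes of $\PG(n+1,q)$ through $\Sigma$ other than $H_\infty$ partition the affine points into $q$ classes $\Pi_0, \dots, \Pi_{q-1}$, and since every line of $\L$ meets $H_\infty$ in a point of $\K \subseteq \Sigma$, each line of $\L$ is entirely contained in one $\Pi_i$. So $T_n^*(\K)$ decomposes as $q$ disjoint copies of the linear representation $T_{n-1}^*(\K)$ (viewing $\K$ inside $\Sigma$), sitting in parallel hyperplanes.

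The construction of a non-geometric automorphism is then to permute, or more cleverly mix, these $q$ parallel copies in a way no collineation of $\PG(n+1,q)$ can realise. The cleanest choice: fix coordinates so that $\Sigma$ has equations $X_0 = X_1 = 0$ inside $H_\infty : X_0 = 0$, say $H_\infty : X_0=0$ and $\K \subseteq \{X_0 = X_1 = 0\}$. An affine point is $(1, x, \mathbf{y})$ with $x \in \F_q$ and $\mathbf{y} \in \F_q^{n}$; the coordinate $x$ is constant on each line of $\L$ (as such a line meets $H_\infty$ in a point with $X_1$-coordinate $0$). Now take any permutation $\sigma$ of $\F_q$ and define $\phi_\sigma : (1,x,\mathbf{y}) \mapsto (1, \sigma(x), \mathbf{y})$. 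Since $x$ is constant along lines of $\L$ and $\phi_\sigma$ acts as the identity on the $\mathbf{y}$-coordinates, $\phi_\sigma$ maps each line of $\L$ in the copy $x = c$ bijectively onto the corresponding line in the copy $x = \sigma(c)$, preserving incidence; hence $\phi_\sigma \in \Aut(T_n^*(\K))$. It remains to choose $\sigma$ so that $\phi_\sigma$ is not induced by a collineation: a collineation of $\PG(n+1,q)$ stabilising $H_\infty$ and permuting the parallel hyperplanes through $\Sigma$ induces on the index set $\{0,\dots,q-1\}$ (identified with $\F_q$ via the $X_1$-direction, or rather the quotient) an element of $\AGL(1,q) \rtimes \Aut(\F_q)$, i.e. a map of the form $x \mapsto a x^{\rho} + b$. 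Since $q \geq 4$, the symmetric group $S_q$ strictly contains $\PGammaL(1,q) = \AGL(1,q)\rtimes\Aut(\F_q)$ (for $q \geq 4$ one has $|\AGL(1,q)\rtimes\Aut(\F_q)| = q(q-1)\log_p q < q!$), so there is a permutation $\sigma$ not of this form; for that $\sigma$, $\phi_\sigma$ is non-geometric.

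The only real point needing care is the last step: one must verify that any collineation of $\PG(n+1,q)$ inducing $\phi_\sigma$ necessarily fixes $H_\infty$ and permutes the hyperplanes through $\Sigma$, and that the induced permutation of the $q$ parallel classes is of the claimed affine-semilinear shape. For the first part, $\phi_\sigma$ fixes every point of $\K$ at infinity (those are forced, being the limits of the lines of $\L$ it stabilises setwise within each copy), and more to the point it fixes the affine point set and hence any collineation inducing it fixes $H_\infty$ setwise; it also fixes $\Sigma = \langle\K\rangle$ and hence permutes the $q+1$ hyperplanes through $\Sigma$, fixing $H_\infty$, so it permutes the $q$ affine ones. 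The induced action on these $q$ hyperplanes, being induced by a collineation, is the action of an element of $\PGammaL$ on a pencil of hyperplanes through a fixed codimension-$2$ subspace, which is exactly $\PGammaL(1,q)$ in its action on the $q$ affine members — here I would just cite the standard fact (or spell out the one-line coordinate computation). Then compare cardinalities: $q! > q(q-1)\log_p q$ for all prime powers $q \geq 4$, giving the desired $\sigma$ and completing the proof. The main obstacle, such as it is, is nailing down this identification of the collineation-induced permutations with $\PGammaL(1,q)$ cleanly; everything else is routine.
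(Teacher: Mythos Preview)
Your approach is sound and genuinely different from the paper's. The paper constructs a single explicit non-geometric automorphism: with $\pi=\langle\K\rangle$ of dimension $d$, it picks two $(d+1)$-spaces $\mu_1,\mu_2$ meeting $H_\infty$ exactly in $\pi$ and a point $Q\in H_\infty\setminus\pi$, swaps the affine parts of $\mu_1$ and $\mu_2$ via projection from $Q$, and fixes every other affine point; non-geometricity follows because a suitable affine line through a point $S\neq Q$ of $H_\infty\setminus\pi$ has $q-2\geq 2$ fixed affine points but two points moved off the line. Your construction instead exhibits an entire copy of the symmetric group $S_q$ inside $\Aut(T_n^*(\K))$ by permuting the $q$ parallel hyperplanes through a hyperplane $\Sigma\supseteq\langle\K\rangle$, which is structurally more informative and (once corrected as below) even works for $q=3$.

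There is, however, a real gap in your final counting step. You assert $q(q-1)\log_p q<q!$ for all prime powers $q\geq 4$, but for $q=4$ this reads $4\cdot 3\cdot 2=24=4!$; indeed $\AGL(1,4)\rtimes\Aut(\F_4)\cong S_4$, so every permutation of $\F_4$ is of the form $x\mapsto ax^\rho+b$. As written, your argument therefore produces no non-geometric $\phi_\sigma$ when $q=4$. The remedy is to notice that the constraint on $\sigma$ is actually stronger than you record: a collineation $\gamma$ inducing $\phi_\sigma$ must agree with $\phi_\sigma$ on \emph{all} of $\P$, not merely permute the pencil correctly. Taking the three collinear affine points $(1,0,\mathbf{0})$, $(1,1,\mathbf{e}_1)$, $(1,a,a\mathbf{e}_1)$ with $a\neq 0,1$ and requiring their images $(1,\sigma(0),\mathbf{0})$, $(1,\sigma(1),\mathbf{e}_1)$, $(1,\sigma(a),a\mathbf{e}_1)$ to remain collinear forces $\sigma(a)=\sigma(0)+a\bigl(\sigma(1)-\sigma(0)\bigr)$ for every $a\in\F_q$; that is, $\sigma\in\AGL(1,q)$ with no field automorphism permitted. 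Since $|\AGL(1,q)|=q(q-1)<q!$ for all $q\geq 3$, any $\sigma\notin\AGL(1,q)$ then yields a non-geometric $\phi_\sigma$, and the proof goes through.
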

\begin{proof} Let $\pi$ be the subspace of dimension $d$, spanned by $\K$ in $H_\infty$ and let $\mu_1$ and $\mu_2$ be $(d+1)$-dimensional spaces in $\PG(n+1,q)$, meeting $H_\infty$ exactly in $\pi$. Let $Q$ be a point of $H_\infty$, not contained in $\pi$. Let $\phi$ be the mapping from points $P$ in $\P$ to $\P$ defined as follows. When $P\in \mu_1$, then $\phi(P)=\langle Q,P\rangle \cap \mu_2$, when $P\in \mu_2$, then $\phi(P)=\langle Q,P\rangle\cap \mu_1$,  in all other cases $\phi(P)=P$. It is clear that $\phi$ is a bijection and that a line through a point $R$ of $\pi$ is mapped onto a line through $R$, thus $\phi$ is an automorphism of $T_n^*(\K)$. Let $M$ be a line through a point $S\neq Q$ of $H_\infty\setminus \pi$, then we see  that $q-2$ points of $M$ are fixed, but the intersection points of $M$ with $\mu_1$ and $\mu_2$ are mapped onto points that are not on $M$. Hence, $\phi$ is not induced by a collineation of $\PG(n+1,q)$.
\end{proof}

\subsubsection{The span of $\K$ is $H_\infty$ but the closure of $\K$ is not}
We will construct a non-geometric automorphism of $T_2^*(\K)$, where $\K$ is a Baer subplane of $\PG(2,q^2)$. It is worth noticing that in this case, $T_2^*(\K)$ is a semipartial geometry, and that the general belief was that for every $T_m^*(\K)$ that is a semipartial geometry, every automorphism is geometric (see e.g. \cite[Remark 7.3.13]{stefaan2}).

In this construction, we use the representation of $\PG(3,q^2)$ in $\PG(6,q)$ of Barlotti-Cofman \cite{Barlotti}. The points of the hyperplane $H_\infty$ in $\PG(3,q^2)$ are represented as lines of a Desarguesian spread $\D$ in $J_\infty=\PG(5,q)$. The affine points of $\PG(3,q^2)$ with respect to $H_\infty$ can be identified with the affine points of a projective space $\PG(6,q)$, with respect to the hyperplane $J_\infty$. The lines of $\PG(3,q^2)$ not in $H_\infty$ correspond to the planes of $\PG(6,q)$, meeting $J_\infty$ in a line of $\D$.

We start with a lemma.
\begin{lemma} \label{hulp}For $q>2$, let $\D$ be a Desarguesian spread in $\PG(5,q)$ representing $\PG(2,q^2)$ and let $B$ be the set of lines of $\D$ corresponding to a Baer subplane of $\PG(2,q^2)$. Then there exist an element $\psi$ of $\PGammaL(6,q)$ and a line $L$ of $\D$ such that $\psi$ stabilises $B$ setwise and such that $\psi(L)$ is a line meeting $L$ in a point.
\end{lemma}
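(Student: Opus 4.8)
\emph{Proof strategy.}
The plan is to exploit that, after field reduction, the lines of $\D$ lying in a Baer subplane constitute one of the two rulings of a Segre variety $\mathcal{S}\cong\PG(1,q)\times\PG(2,q)$ of $\PG(5,q)$, so that the (large) group $\PGL(2,q)\times\PGL(3,q)$ acts on $\PG(5,q)$ stabilising $B$ setwise, while being strictly bigger than the stabiliser of the spread $\D$ itself. A suitably chosen collineation from that group will move a spread line $L$ lying outside $B$ off $\D$, and, with the right choice, to a line still meeting $L$ in one point.

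First I would reduce to a standard model. Since all Desarguesian spreads of $\PG(5,q)$ are $\PGL(6,q)$-equivalent, and, for a fixed one, all Baer subplanes of the represented $\PG(2,q^{2})$ are equivalent under $\PGL(3,q^{2})\le\PGammaL(6,q)_{\D}$, it suffices to treat $\F_{q^{2}}^{3}=\F_{q^{2}}\otimes_{\F_q}\F_q^{3}$, with $\D$ the set of $\F_{q^{2}}$-lines and $B=\{\langle w\rangle_{\F_{q^{2}}}\mid 0\neq w\in\F_q^{3}\}$. Then $B$ is exactly the line-ruling $\{[\PG(\F_{q^{2}})\otimes w]\mid w\}$ of $\mathcal{S}=\{[\alpha\otimes w]\}$, and since the two rulings of $\mathcal{S}$ consist of subspaces of different dimensions, every map $\psi_{A,C}:=A\otimes C$ with $A\in\GL_2(q)$ (acting on the $\F_q$-span of $\{1,\omega\}\cong\F_{q^{2}}$) and $C\in\GL_3(q)$ (on $\F_q^{3}$) induces an element of $\PGL(6,q)\le\PGammaL(6,q)$ permuting $B$, hence stabilising it setwise.

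Next I would single out the spread line $L=\langle e_{1}+\omega e_{2}\rangle_{\F_{q^{2}}}$, which is not in $B$ because $e_{1}+\omega e_{2}=1\otimes e_{1}+\omega\otimes e_{2}$ is a rank-$2$ tensor. Working inside the $3$-space $U=\langle e_{1},e_{2}\rangle_{\F_{q^{2}}}$, I identify $U=\F_q\langle1,\omega\rangle\otimes\langle e_{1},e_{2}\rangle_{\F_q}$ with $M_{2}(\F_q)$ (rows indexed by $1,\omega$, columns by $e_{1},e_{2}$). Under this identification $L$ becomes the $\F_q$-subalgebra $E=\langle I,J\rangle_{\F_q}\cong\F_{q^{2}}$ (with $J$ the companion matrix of the minimal polynomial of $\omega$), and, for any $C$ fixing $\langle e_1,e_2\rangle$ and $e_3$, the restriction $\psi_{A,C}|_{U}$ acts by $M\mapsto A\,M\,C_{0}^{\top}$, where $C_{0}$ is the block of $C$ on $\langle e_{1},e_{2}\rangle$. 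I then choose $A\in\GL_2(q)$ that does \emph{not} normalise $E$ and take $C$ to be the block-diagonal matrix with blocks $(A^{\top})^{-1}$ and $1$; then $\psi:=\psi_{A,C}$ restricts on $U$ to the conjugation $M\mapsto AMA^{-1}$. Here the hypothesis $q>2$ is precisely what is needed: the normaliser of $E^{\times}$ in $\GL_2(q)$ is the Singer-type group $E^{\times}\rtimes\operatorname{Gal}(\F_{q^{2}}/\F_q)$, of index $q(q-1)/2$ in $\GL_2(q)$, which is $>1$ exactly when $q>2$.

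Finally, the intersection claim: $\psi(L)$ corresponds to $AEA^{-1}$, which contains $AIA^{-1}=I$, so $\psi(L)$ and $L$ share the point $[I]$ of $\PG(5,q)$; and $AEA^{-1}\neq E$ as subspaces because $A$ does not normalise $E$, so these two $2$-dimensional $\F_q$-subspaces of $M_{2}(\F_q)$ are distinct yet both contain $I$, hence meet in the $1$-dimensional subspace $\F_q I$. Therefore $\psi(L)$ is a line meeting $L$ in the single point $[I]$; in particular $\psi(L)\neq L$, and, since members of a spread are pairwise disjoint, $\psi(L)\notin\D$ (equivalently, $\psi$ is not $\F_{q^{2}}$-semilinear). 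Conjugating $\psi$ and pulling back $L$ through the collineation used in the first step yields the statement for the original $\D$ and $B$. The only point requiring care is the dictionary identifying $B$ with the Segre line-ruling and $\psi_{A,C}|_{U}$ with the two-sided matrix action; once that is in place, the remainder is just the elementary linear algebra of two planes through a common point, so I do not expect a serious obstacle.
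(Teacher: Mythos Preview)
Your argument is correct and takes a genuinely different route from the paper's.

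The paper works inside a $3$-space $\Pi$ spanned by a regulus $\R\subset B$: it invokes the classification of semifields of order $q^{2}$ with centre containing $\F_q$ (Dickson, via the correspondence in~\cite{cardinali}) to obtain that the stabiliser of $\R$ is transitive on the lines of $\Pi$ disjoint from $\R$, hence contains an element moving some spread line $L\in\D'\setminus\R$ to a line meeting $L$. It then lifts this $\PGammaL(4,q)$-element to $\PGammaL(6,q)$ by an explicit block-matrix construction, arranged so that one regulus line is fixed pointwise, and verifies that the lift preserves $B$ by tracking intersections of $3$-spaces spanned by pairs of lines in $B$.

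You instead recognise $B$ as the line-ruling of the Segre variety $\PG(1,q)\times\PG(2,q)$ arising from $\F_{q^{2}}^{3}=\F_{q^{2}}\otimes_{\F_q}\F_q^{3}$, so that the whole group $\GL_2(q)\times\GL_3(q)$ acts through $\PGL(6,q)$ preserving $B$ automatically. Inside $U=\langle e_1,e_2\rangle_{\F_{q^{2}}}\cong M_2(\F_q)$, a suitable choice of the $\GL_3$-factor turns the action into conjugation $M\mapsto AMA^{-1}$, and the existence of the required $\psi$ becomes the elementary group-theoretic fact that the Singer-type subalgebra $E\cong\F_{q^{2}}$ has normaliser of index $q(q-1)/2>1$ in $\GL_2(q)$ precisely when $q>2$; the common point $[I]$ falls out for free.

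Your approach is more self-contained---no semifield classification is needed---and it makes the role of the hypothesis $q>2$ completely transparent via the index computation. The paper's route, though heavier, yields as a by-product the transitivity statement for $\PGammaL(4,q)_{\R}$ and produces a $\psi$ fixing a regulus line pointwise; neither is required for the lemma as stated, but the former is of independent interest.
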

\begin{proof} Let $\R$ be a regulus contained in $B$ and let $\D'$ be the Desarguesian spread consisting of elements of $\D$ in the $3$-space $\Pi$ spanned by the elements of $\R$. The stabiliser of $\R$ acts transitively on the lines of $\Pi$ that do not intersect a line of $\R$: the orbits of the stabiliser of $\R$ on lines of $\Pi$ not intersecting $\R$ are in one-to-one correspondence with isotopy classes of semifields of order $q^2$ with center containing $q$ (see \cite{cardinali}). By a classical result of Dickson, every such semifield is a field \cite{Dickson}. This implies that there is only one such isotopy class, so for $q>2$, if $L$ is a line of $\D'$, not in $\R$, then there exists an element $\phi$ of $\PGammaL(4,q)$ stabilizing $\R$ and mapping $L$ to a line $L'$ intersecting $L$ in a point. If $q=2$, there are only two lines disjoint from $\R$ and they both belong to $\D'$, so such line $L'$ does not exist.
Every collineation of $\PG(1,q^2)$ induces a collineation of $\PG(3,q)$ stabilising the spread $\D'$, hence, as there exists a collineation of $\PG(1,q^2)$ stabilising the subline corresponding to $\R$ and fixing one point of this subline, this implies that there is a collineation $\phi$ of $\Pi$ stabilising $\R$, fixing one element $R_1$ of $\R$ elementwise and mapping an element $M$ of $\D'$ onto a line meeting $M$ in a point.

Let $L_1$ and $L_2$ be two skew lines in $B$, such that $\langle L_1,L_2\rangle$ meets $\Pi$ exactly in the line $R_1$. The set $\D'\cup \{L_1,L_2\}$ extends to a unique Desarguesian spread of $\PG(5,q)$, which is necessarily equal to $\D$.

Without loss of generality, we may assume that $R_1$ is the line through the points $(1,0,0,0,0,0)$ and $(0,1,0,0,0,0)$, while $L_1$ is the line $\langle(0,0,0,0,1,0),(0,0,0,0,0,1)\rangle$ and $L_2$ is the line $\langle(1,0,0,0,1,0),(0,1,0,0,0,1)\rangle$. If the collineation $\phi$ is given by the matrix $A=(a_{ij})$, $0\leq i,j\leq 3$, then let $C=(c_{ij})$, $0\leq i,j\leq 5$,
be the matrix with $c_{ij}=a_{ij}$ for $0\leq i,j\leq 3$, $c_{44}=a_{00}$,  $c_{45}=a_{01}$,  $c_{54}=a_{10}$,  $c_{55}=a_{11}$ and $0$ elsewhere. Now $C$, together with $\theta \in \Aut(\F_q)$ corresponding to $\phi$, defines an element $\psi$ of $\PGammaL(6,q)$, which stabilises $\R$ and fixes $R_1,L_1$ and $L_2$; these elements of $\D$ define a unique Baer subplane of $\PG(2,q^2)$.

Let $L_3$ be a line of $B$, not contained in $\Pi$ or $\langle L_1,L_2\rangle$, then $L_3=\langle L_1,R_2\rangle\cap \langle L_2,R_3\rangle$ for some $R_2,R_3$ in $\R$. It follows that $\psi(L_3) =\langle \psi(L_1),\psi(R_2)\rangle\cap \langle \psi(L_2),\psi(R_3)\rangle$ =$\langle L_1,R_4\rangle\cap \langle L_2,R_5\rangle$ for some $R_4,R_5$ in $\R$, which is clearly contained in the set $B$. A line $L_4$, contained in $\langle L_1,L_2\rangle$ can now, in the same way, be written as the intersection of $3$-spaces spanned by elements of $B$, which are in the previous part showed to be mapped by $\psi$ onto elements of $B$. Hence, also the line $L_4$ is mapped onto a line of $B$ by $\psi$ and the statement follows.
\end{proof}

In the following theorem, we will show that the collineation constructed in the previous lemma corresponds to a permutation of the affine points of $\PG(3,q^2)$ that is not a collineation.

\begin{theorem} For $q>2$, let $\K$ be a Baer subplane in $\PG(2,q^2)$, then there exist non-geometric automorphisms of the semipartial geometry $T_2^*(\K)$.
\end{theorem}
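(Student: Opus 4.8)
The plan is to leverage Lemma~\ref{hulp} directly: it produces a collineation $\psi$ of $\PG(5,q)$ (extending to $\PG(6,q)$ in the Barlotti--Cofman setting) that stabilises the line-set $B$ representing the Baer subplane $\K$, but maps some spread line $L$ to a line $\psi(L)$ meeting $L$ in a point. First I would transfer this to the $7$-dimensional picture: $\psi$ acts on $J_\infty=\PG(5,q)$ stabilising the Desarguesian spread $\D$ (since it stabilises enough structure to determine $\D$ uniquely, as shown in the lemma's proof), hence it extends to a collineation $\hat\psi$ of $\PG(6,q)$ fixing $J_\infty$; via Barlotti--Cofman, $\hat\psi$ induces a bijection $\bar\psi$ on the affine points of $\PG(3,q^2)$, i.e.\ on the point set $\P$ of $T_2^*(\K)$. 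Because $\psi$ stabilises $B$, it permutes the spread lines corresponding to points of $\K$, so $\bar\psi$ sends a line of $T_2^*(\K)$ (a plane of $\PG(6,q)$ meeting $J_\infty$ in a line of $B$) to another such plane; thus $\bar\psi$ is an automorphism of $T_2^*(\K)$.

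The remaining point is that $\bar\psi$ is \emph{not} geometric, i.e.\ not induced by a collineation of $\PG(3,q^2)$. Here I would argue as follows. A collineation of $\PG(3,q^2)$ fixing $H_\infty$ induces, in the Barlotti--Cofman representation, a collineation of $\PG(6,q)$ that stabilises $J_\infty$ and sends every spread line of $\D$ to a spread line of $\D$, and moreover two spread lines are always disjoint. So if $\bar\psi$ were geometric, the underlying $\hat\psi$ would map the spread line $L$ to another spread line, in particular to a line disjoint from $L$ or equal to $L$. But by construction $\psi(L)=\hat\psi(L)$ meets $L$ in exactly one point and is not equal to $L$ --- a contradiction. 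Hence $\bar\psi$ is a non-geometric automorphism of $T_2^*(\K)$, and since $T_2^*(\K)$ is a semipartial geometry when $\K$ is a Baer subplane, this proves the theorem.

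The main obstacle I anticipate is bookkeeping the correspondence between the three layers --- collineations of $\PG(6,q)$ stabilising $J_\infty$ and $\D$, permutations of the affine points of $\PG(3,q^2)$, and (potential) collineations of $\PG(3,q^2)$ --- carefully enough to be sure that (a) $\hat\psi$ genuinely descends to a well-defined bijection $\bar\psi$ of $\P$ mapping lines to lines (this uses that $\psi$ fixes $J_\infty$ setwise and stabilises $\D$, so planes meeting $J_\infty$ in a spread line go to planes of the same type), and (b) the implication ``geometric $\Rightarrow$ maps spread lines to disjoint spread lines'' is exactly the obstruction violated by $\psi(L)\cap L\neq\emptyset$. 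One should also note that $q>2$ is essential and is already built into Lemma~\ref{hulp} (for $q=2$ no such $\psi$ exists), so no extra case analysis is needed; the theorem is essentially a corollary of Lemma~\ref{hulp} once the transfer through the Barlotti--Cofman correspondence is made precise.
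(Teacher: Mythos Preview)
Your overall strategy matches the paper's, but there is an internal inconsistency you must repair. You assert that $\psi$ stabilises the Desarguesian spread $\D$, justifying this by saying the lemma's proof shows $\psi$ preserves enough structure to determine $\D$. This is false: in Lemma~\ref{hulp}, $\psi$ stabilises the regulus $\R$ and fixes $L_1,L_2$, but it does \emph{not} stabilise $\D'$ --- it was constructed precisely so that the chosen $L\in\D'\setminus\R$ is sent to a line meeting $L$, hence $\psi(L)\notin\D'$ and a fortiori $\psi(L)\notin\D$. You yourself rely on $\psi(L)\notin\D$ a few lines later for the non-geometricity argument, so the two claims contradict one another. Fortunately the false claim is not needed: for $\bar\psi$ to be an automorphism of $T_2^*(\K)$ one only requires that $\psi$ stabilise $B$, so that planes through $B$-lines go to planes through $B$-lines, and you state this correctly in your second sentence. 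Simply delete the assertion that $\psi$ stabilises $\D$.

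For non-geometricity your route differs from the paper's. The paper argues directly: take an affine line $M'$ of $\PG(3,q^2)$ through the point $S$ corresponding to $L$; its Barlotti--Cofman image is a plane through $L$, which $\hat\psi$ sends to a plane through $\psi(L)$; since $\psi(L)\notin\D$, that plane meets $q+1$ distinct spread lines, so $\bar\psi$ maps the affine points of $M'$ onto a Baer subplane rather than a line --- hence $\bar\psi$ is not a collineation of $\PG(3,q^2)$. Your argument instead requires two auxiliary facts: that every collineation of $\PG(3,q^2)$ fixing $H_\infty$ is induced by a collineation of $\PG(6,q)$ stabilising both $J_\infty$ and $\D$, and that two collineations of $\PG(6,q)$ agreeing on all affine points coincide. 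Both are true, so your argument can be made to work, but the paper's direct ``line $\mapsto$ Baer subplane'' computation is cleaner and avoids this extra bookkeeping.
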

\begin{proof} Let $\psi$ be the collineation of $\PG(5,q)$ stabilising the line set $B$ of $J_\infty=\PG(5,q)$ setwise and such that $\psi(L)\cap L\neq \emptyset$ for some $L\in \D$, as found in Lemma \ref{hulp}. The element $\psi$ extends to a collineation $\phi$ of $\PG(6,q)$ preserving affine points (with respect to $J_\infty$). Let $\chi$ be the permutation of affine points of $\PG(3,q^2)$, corresponding to $\phi$ and let $P,Q,R$ be three collinear affine points, lying on a line $M$ of $\PG(3,q^2)$, such that $M$ meets $H_\infty$ in a point of $\K$. The points $P,Q,R$ correspond to $3$ affine points of $\PG(6,q)$, contained in a plane $\mu$ through a line of $B$. Now $\phi$ maps $\mu$ onto a plane through an element of $B$, which corresponds to a line $N$ of $\PG(3,q^2)$ through a point of $\K$. It follows that $\chi(P),\chi(Q),\chi(R)$ belong to $N$, and hence, that $\chi$ defines an automorphism of $T_2^*(\K)$.

Let $S$ be the point of $H_\infty$ corresponding to $L$ and let $M'$ be a line in $\PG(3,q^2)$ meeting $H_\infty$ in $S$. The affine points of $M'$ correspond to the affine points of a plane $\mu'$ through $L$. These are mapped by $\phi$ onto the affine points of a plane $\nu$, where $\nu$ meets $q+1$ different lines of the spread $\D$. Hence, the affine points of $M'$ are mapped by $\chi$ onto the points of a Baer subplane. This shows that $\chi$ is not induced by a collineation of $\PG(3,q^2)$.
\end{proof}

It follows from the proof of the previous theorem that $\PGammaL(7,q)_B\leq \Aut(T_2^*(\K))$, where $\K$ is a Baer subplane in $\PG(2,q^2)$ and $B$ denotes the set of lines corresponding to $\K$ of the Desarguesian spread $\D$, corresponding to $\PG(2,q^2)$. It is not hard to calculate that $\PGammaL(7,q)_B$ is $q(q-1)/2$ times larger than the group of geometric automorphisms $\PGammaL(3,q^2)_\K$. It would be interesting to know whether $\PGammaL(7,q)_B$ is the full automorphism group of $\Aut(T_2^*(\K))$ when $\K$ is a Baer subplane. Unfortunately, with the methods developed in Section \ref{sec3} we cannot prove this result. For $q=2,3,4$, we were able to confirm this result by computer (by comparing orders).

Finally, if we consider the point set $\K=\PG(2,2)$ embedded in $H_\infty=\PG(2,8)$, then the group $\Aut(T_2^*(\K))$ turns out to be eight times larger than $\PGammaL(4,8)_\K$. If $\K=\PG(3,3)$ in $\PG(3,9)$, then we see that the group $\Aut(T_3^*(\K))$ is three times larger than the group of geometric automorphisms.

\section{The description of the automorphism group}\label{sec5}

Since an element of $(\PGammaL(n+2,q)_{H_\infty})_\K$ induces a geometric automorphism, it defines an element of $\Aut(\Gamma_{n,q}(\K))$ (or $\Aut(T_n^*(\K))$). In the previous section, we have shown that, under certain conditions, $(\PGammaL(n+2,q)_{H_\infty})_\K$ is the full automorphism group of $\Gamma_{n,q}(\K)$ (or $T_n^*(\K)$). It is our goal to provide a more explicit description of  $(\PGammaL(n+2,q)_{H_\infty})_\K$ in this section.

If the set of elements of $\PGammaL(n+2,q)$ fixing all points of
the hyperplane $H_\infty$ is written as $\Persp(H_\infty)$, then
$\Persp(H_\infty)$ consists of all elations and homologies with axis
$H_\infty$.

\begin{lemma} \label{ext} The group $(\PGammaL(n+2,q)_{H_\infty})_\K$ is an extension of $\Persp(H_\infty)$ by $\PGammaL(n+1,q)_\K$ and $(\PGL(n+2,q)_{H_\infty})_\K$ is an extension of $\Persp(H_\infty)$ by $\PGL(n+1,q)_\K$.
\end{lemma}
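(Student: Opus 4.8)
The plan is to realise both isomorphisms through the homomorphism that restricts a collineation of $\PG(n+1,q)$ fixing $H_\infty$ to a collineation of $H_\infty$. Fix coordinates so that $H_\infty$ has equation $X_{n+1}=0$. An element of $\PGammaL(n+2,q)_{H_\infty}$ is then represented by a pair $(A,\theta)$ with $\theta\in\Aut(\F_q)$ and $A\in\GL(n+2,q)$ of the form $A=\left(\begin{smallmatrix}B&v\\0&d\end{smallmatrix}\right)$, where $B\in\GL(n+1,q)$, $v\in\F_q^{\,n+1}$ and $d\in\F_q^{\ast}$; the bottom row is forced to be $(0,\dots,0,d)$ precisely because $A$ must stabilise the subspace $X_{n+1}=0$. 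Define $\rho\colon\PGammaL(n+2,q)_{H_\infty}\to\PGammaL(n+1,q)$ by $(A,\theta)\mapsto(B,\theta)$, i.e.\ by restriction to $H_\infty$. Independence of the matrix representative is immediate (scaling $A$ by $\mu$ scales $B$ by $\mu$), and, writing compositions of semilinear maps as $(A,\theta)(A',\theta')=(A(A')^{\theta},\theta\theta')$, the top-left block of $A(A')^{\theta}$ is $B(B')^{\theta}$, so $\rho$ is a group homomorphism.

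Next I would determine $\ker\rho$. If $(B,\theta)$ induces the identity collineation of $\PG(n,q)$ then $\theta=1$ and $B$ is a scalar matrix, so, rescaling, $A=\left(\begin{smallmatrix}I&v\\0&d\end{smallmatrix}\right)$; such an $A$ fixes $H_\infty$ pointwise, and conversely every elation and homology with axis $H_\infty$ has this shape. Hence $\ker\rho=\Persp(H_\infty)$. In particular $\Persp(H_\infty)$ is normal in $\PGammaL(n+2,q)_{H_\infty}$, and since its elements fix $H_\infty$ and hence $\K\subseteq H_\infty$ pointwise, we have $\Persp(H_\infty)\leq(\PGammaL(n+2,q)_{H_\infty})_\K$, a normal subgroup there as well.

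It remains to compute $\rho\big((\PGammaL(n+2,q)_{H_\infty})_\K\big)$. It is contained in $\PGammaL(n+1,q)_\K$: if $(A,\theta)$ stabilises $\K$ then, as $\K$ lies in $H_\infty$, so does the restriction $(B,\theta)$. Conversely, given $(B,\theta)\in\PGammaL(n+1,q)_\K$, the element $\big(\left(\begin{smallmatrix}B&0\\0&1\end{smallmatrix}\right),\theta\big)$ stabilises $H_\infty$, acts on $H_\infty$ exactly as $(B,\theta)$ and hence stabilises $\K$, and maps to $(B,\theta)$ under $\rho$; so $\rho$ is onto $\PGammaL(n+1,q)_\K$. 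The first isomorphism theorem then gives $(\PGammaL(n+2,q)_{H_\infty})_\K/\Persp(H_\infty)\cong\PGammaL(n+1,q)_\K$, which is the claimed extension. The $\PGL$ statement is obtained by running the same argument with $\theta=1$ everywhere: $\Persp(H_\infty)$ consists of linear maps so lies in $\PGL(n+2,q)_{H_\infty}$, the kernel of the restriction of $\rho$ to $\PGL(n+2,q)_{H_\infty}$ is again $\Persp(H_\infty)$, and $B\mapsto\left(\begin{smallmatrix}B&0\\0&1\end{smallmatrix}\right)$ witnesses surjectivity onto $\PGL(n+1,q)_\K$. There is no real obstacle here; the only point needing care is the exact identification $\ker\rho=\Persp(H_\infty)$ --- using that $A$ stabilises $X_{n+1}=0$, which pins down the bottom row so that no collineation outside the elation/homology group lands in the kernel --- together with matching ``extension of $N$ by $H$'' to the paper's convention $N\trianglelefteq G$, $G/N\cong H$, which is exactly what the isomorphism theorem delivers.
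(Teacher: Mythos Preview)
Your proof is correct and follows essentially the same approach as the paper's: both use the restriction homomorphism to $H_\infty$ (the paper calls it ``the action on $H_\infty$''), identify its kernel as $\Persp(H_\infty)$, and its image as $\PGammaL(n+1,q)_\K$ (respectively $\PGL(n+1,q)_\K$). The paper's proof is a two-sentence sketch that simply asserts the kernel and image, whereas you spell everything out in coordinates and verify surjectivity by an explicit lift; the substance is the same.
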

\begin{proof}
The kernel of the action of $(\PGammaL(n+2,q)_{H_\infty})_\K$, $(\PGL(n+2,q)_{H_\infty})_\K$ respectively, on $H_{\infty}$ is clearly
$\Persp(H_\infty)$. The image of the action is isomorphic to
$\PGammaL(n+1,q)_\K$, $\PGL(n+1,q)_\K$ respectively, showing that $(\PGammaL(n+2,q)_{H_\infty})_\K$, $(\PGL(n+2,q)_{H_\infty})_\K$ respectively, is an extension of
$\Persp(H_\infty)$ by $\PGammaL(n+1,q)_\K$, $\PGL(n+1,q)_\K$ respectively.
\end{proof}

\begin{remark}
In general, $\PGammaL(n+2,q)_{H_\infty}$ is an extension of $\Persp(H_\infty)$ by $\PGammaL(n+1,q)$. However, this extension does not necessarily split since $\PGammaL(n+1,q)$ is not necessarily embeddable in $\PGammaL(n+2,q)$. For example, $\PGL(4,4)$ has no subgroup isomorphic to $\PGL(3,4)$. Depending on the choice of $\K$, we can investigate whether $(\PGammaL(n+2,q)_{H_\infty})_\K$ does split over $\Persp(H_\infty)$. To show that this extension splits, we need to embed the group $\PGammaL(n+1,q)_\K$ in $(\PGammaL(n+2,q)_{H_\infty})_\K$, and different groups $\PGammaL(n+1,q)_\K$ may require a different proof. In the next theorem, we give a general condition on $\K$ that is sufficient to show that the extension splits. The condition is not necessary: in \cite{wij}, the theorem is shown to hold when $\K$ is a basis or frame in $H_\infty$, and in \cite{stefaan}, it is shown that the same holds for Buekenhout-Metz unitals. However, our theorem proves the theorem for a lot of different point sets $\K$ at the same time.
\end{remark}
We start with an easy lemma.

\begin{lemma} \label{aut} If there is an element of $\PGammaL(n+1,q)$ mapping $\K$ to a point set $\K'$ that is stabilised under the Frobenius automorphism, then
$\PGammaL(n+1,q)_\K\cong \PGL(n+1,q)_\K \rtimes \Aut(\F_q)$.
\end{lemma}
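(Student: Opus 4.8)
The plan is to reduce to the case where $\K$ itself is Frobenius-invariant, and then exhibit an explicit complement to $\PGL(n+1,q)_\K$ inside $\PGammaL(n+1,q)_\K$ given by a cyclic group of coordinate-wise field automorphisms. First I would pick an element $g_0\in\PGammaL(n+1,q)$ with $g_0(\K)=\K'$. Since $\PGL(n+1,q)$ is normal in $\PGammaL(n+1,q)$, conjugation by $g_0$ restricts to an isomorphism $\PGammaL(n+1,q)_\K\to\PGammaL(n+1,q)_{\K'}$ carrying $\PGL(n+1,q)_\K$ onto $\PGL(n+1,q)_{\K'}$. Hence it is enough to prove the statement for $\K'$, so we may assume from the start that $\K$ is stabilised by the Frobenius automorphism $\theta\colon x\mapsto x^p$, where $q=p^h$.

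Next I would introduce the collineation $\phi$ of $\PG(n,q)$ given by $(x_0:\cdots:x_n)\mapsto(x_0^{p}:\cdots:x_n^{p})$, i.e. the application of $\theta$ to every coordinate. By hypothesis $\phi$ stabilises $\K$, so $\phi\in\PGammaL(n+1,q)_\K$. Put $C=\langle\phi\rangle$. Since $\theta$ has order $h=|\Aut(\F_q)|$ and, for $0<i<h$, the companion field automorphism of $\phi^i$ is $\theta^i\neq 1$, the collineation $\phi^i$ is not linear; therefore $C\cong\Aut(\F_q)$ and $C\cap\PGL(n+1,q)=\{1\}$, so in particular $C\cap\PGL(n+1,q)_\K=\{1\}$. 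Also $N:=\PGL(n+1,q)_\K=\PGL(n+1,q)\cap\PGammaL(n+1,q)_\K$ is normal in $\PGammaL(n+1,q)_\K$, again because $\PGL(n+1,q)$ is normal in $\PGammaL(n+1,q)$.

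It then remains to show $\PGammaL(n+1,q)_\K=N\cdot C$. Given $g\in\PGammaL(n+1,q)_\K$, write it as a semilinear map $x\mapsto Ax^{\theta^{i}}$ with $A\in\GL(n+1,q)$ and $0\le i<h$. Then $g\circ\phi^{-i}$ is the linear map $x\mapsto Ax$ (as $\theta^{-i}\theta^{i}=1$), hence lies in $\PGL(n+1,q)$; and since $g,\phi\in\PGammaL(n+1,q)_\K$ we get $g\phi^{-i}\in N$, so $g=(g\phi^{-i})\phi^{i}\in NC$. Combining this with $C\cap N=\{1\}$ and the normality of $N$ gives $\PGammaL(n+1,q)_\K=N\rtimes C\cong\PGL(n+1,q)_\K\rtimes\Aut(\F_q)$, as desired.

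There is no serious obstacle here: the only place the hypothesis is used is to guarantee that the field-automorphism collineation $\phi$ actually lies in the stabiliser of $\K$, which is precisely what provides the complement $C$; in its absence $\PGammaL(n+1,q)_\K$ need not split over $\PGL(n+1,q)_\K$ (and indeed, as the preceding remark notes, even $\PGammaL(n+1,q)$ itself may fail to split over $\Persp(H_\infty)$ for extension-theoretic reasons). The remaining verifications — that $\phi^i$ is non-linear for $i\not\equiv 0\pmod h$ and the routine identity $g\circ\phi^{-i}=A$ — are elementary.
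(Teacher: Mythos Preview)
Your proof is correct and follows essentially the same route as the paper: reduce to the Frobenius-invariant case (you by conjugating with $g_0$, the paper by noting that $\K$ and $\K'$ lie in the same $\PGL$-orbit), and then observe that the standard complement $\Aut(\F_q)\le\PGammaL(n+1,q)$ lies inside the stabiliser, so the global splitting $\PGammaL(n+1,q)\cong\PGL(n+1,q)\rtimes\Aut(\F_q)$ restricts. Your version spells out the semidirect-product verification explicitly, whereas the paper simply invokes the global splitting and restricts; the one off-key note is your closing parenthetical about $\Persp(H_\infty)$, which concerns a different extension ($\PGammaL(n+2,q)_{H_\infty}$ over $\Persp(H_\infty)$) and is not relevant to whether $\PGammaL(n+1,q)_\K$ splits over $\PGL(n+1,q)_\K$.
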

\begin{proof} Since all automorphisms of $\F_q$ are generated by the Frobenius automorphism, every automorphism of $\F_q$ stabilises $\K'$. From this, if there is an element of $\PGammaL(n+1,q)$ mapping $\K$ to $\K'$, we can also find an element of $\PGL(n+1,q)$ mapping $\K$ to $\K'$. Since $\K'$ is contained in the orbit of $\K$ under $\PGL(n+1,q)$, $\PGL(n+1,q)_\K\cong \PGL(n+1,q)_{\K'}$ and $\PGammaL(n+1,q)_\K\cong \PGammaL(n+1,q)_{\K'}$. Since $\Aut(\F_q)$ stabilises $\K'$, we can restrict the well-known isomorphism $\PGammaL(n+1,q)\cong \PGL(n+1,q) \rtimes \Aut(\F_q)$ to elements of $\PGammaL(n+1,q)_{\K'}$, and the lemma follows.
\end{proof}

\begin{theorem} \label{isom} If the setwise stabiliser $\PGammaL(n+1,q)_\K$, respectively $\PGL(n+1,q)_\K$, of a point set $\K$ spanning $H_\infty=\PG(n,q)$, $q=p^h$, fixes a point of $H_\infty$, then
$\PGammaL(n+2,q)_\K \cong \Persp(H_\infty)\rtimes \PGammaL(n+1,q)_\K$, respectively $\PGL(n+2,q)_\K \cong \Persp(H_\infty)\rtimes \PGL(n+1,q)_\K$.
\end{theorem}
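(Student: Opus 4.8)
The plan is to split the extension of Lemma~\ref{ext} by exhibiting an explicit complement to $\Persp(H_\infty)$ inside $(\PGammaL(n+2,q)_{H_\infty})_\K$, i.e. a subgroup isomorphic to $\PGammaL(n+1,q)_\K$ (respectively $\PGL(n+1,q)_\K$) meeting $\Persp(H_\infty)$ trivially. First note that since $\K$ spans $H_\infty$, every element of $\PGammaL(n+2,q)$ fixing $\K$ stabilises $\langle\K\rangle=H_\infty$, so $\PGammaL(n+2,q)_\K=(\PGammaL(n+2,q)_{H_\infty})_\K$ (and similarly in the linear case); hence it is enough to work with the group of Lemma~\ref{ext} and produce the complement.

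Choose coordinates $X_0,\dots,X_{n+1}$ on $\PG(n+1,q)$ with $H_\infty$ the hyperplane $X_0=0$, and let $R$ be the point of $H_\infty$ fixed by $\PGammaL(n+1,q)_\K$ (resp. $\PGL(n+1,q)_\K$); after a change of coordinates we may assume $R=[v]$ with $v=e_{n+1}$ a vector of the underlying subspace $X_0=0$. The key observation is that fixing $R$ selects a canonical representative: given $g\in\PGL(n+1,q)_\K$, any $A\in\GL(n+1,q)$ inducing the action of $g$ on $H_\infty$ satisfies $Av=\lambda v$, and rescaling by $\lambda^{-1}$ yields a representative $A_g$ with $A_gv=v$, which is then the unique such representative (uniqueness: if $A'$ also represents $g$ and $A'v=v$, then $A'=\mu A_g$ with $\mu v=v$, so $\mu=1$). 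Therefore $g\mapsto A_g$ is a homomorphism $\PGL(n+1,q)_\K\to\GL(n+1,q)_v$, since $A_gA_h$ represents $gh$ and fixes $v$, hence equals $A_{gh}$. The same device works semilinearly: writing $\theta_g$ for the (well-defined) field automorphism attached to $g$, and using that $v$ has coordinates in the prime field, a representative $(A,\theta_g)$ of $g\in\PGammaL(n+1,q)_\K$ can be rescaled to a unique one with $A_gv=v$, giving a homomorphism $g\mapsto(A_g,\theta_g)$.

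Next, extend each representative to $\PG(n+1,q)$ by block-diagonal padding, $\tilde A_g=\left(\begin{smallmatrix}1&0\\0&A_g\end{smallmatrix}\right)$, keeping the same field automorphism in the semilinear case. Each $[\tilde A_g]$ stabilises $H_\infty$ and restricts to $g$ on it, so $g\mapsto[\tilde A_g]$ is a homomorphism from $\PGammaL(n+1,q)_\K$ (resp. $\PGL(n+1,q)_\K$) into $(\PGammaL(n+2,q)_{H_\infty})_\K$; it is injective because $[\tilde A_g]=[\tilde A_h]$ forces (after matching field automorphisms) the proportionality factor to be $1$ by comparing $(0,0)$-entries, whence $A_g=A_h$ and $g=h$. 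Let $C$ be the image. An element of $C$ lying in $\Persp(H_\infty)$ acts trivially on $H_\infty$, so $A_g$ is scalar; then $A_gv=v$ gives $A_g=\mathrm{id}$ and $g=1$, so $C\cap\Persp(H_\infty)=1$. Moreover $C$ maps isomorphically onto $\PGammaL(n+1,q)_\K$ under the restriction map of Lemma~\ref{ext} (the composite $g\mapsto[\tilde A_g]\mapsto g$ is the identity), so for any $[M]\in(\PGammaL(n+2,q)_{H_\infty})_\K$ restricting to $g$, the element $[\tilde A_g]^{-1}[M]$ restricts to the identity on $H_\infty$ and hence lies in the kernel $\Persp(H_\infty)$; thus $(\PGammaL(n+2,q)_{H_\infty})_\K=C\cdot\Persp(H_\infty)$. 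Since $\Persp(H_\infty)$ is normal (it is the pointwise stabiliser of $H_\infty$, equivalently the kernel of the restriction action), we conclude $(\PGammaL(n+2,q)_{H_\infty})_\K=\Persp(H_\infty)\rtimes C\cong\Persp(H_\infty)\rtimes\PGammaL(n+1,q)_\K$, and identically with $\PGL$ in place of $\PGammaL$.

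I do not expect a genuine obstacle; the only points needing a little care are the uniqueness of the normalised representative fixing $v$ and, for the semilinear statement, the fact that the field automorphism accompanying a collineation of $\PG(n,q)$ is well-defined (which is the fundamental theorem of projective geometry, together with the elementary observation that a semilinear map inducing the identity on $\PG(n,q)$ has trivial accompanying automorphism). One should also record that $\Persp(H_\infty)$ is exactly the pointwise stabiliser of $H_\infty$ in $\PGammaL(n+2,q)$ — in particular consists of linear maps — so that it is contained in, and normal in, $(\PGammaL(n+2,q)_{H_\infty})_\K$.
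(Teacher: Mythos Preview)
Your proof is correct and follows essentially the same approach as the paper: both use the fixed point of $\PGammaL(n+1,q)_\K$ on $H_\infty$ to normalise matrix representatives uniquely (you put the point at $e_{n+1}$, the paper normalises its coordinate vector so the first nonzero entry is $1$), then lift by block-diagonal padding with a $1$ in the $(0,0)$-entry, and verify that the resulting copy of $\PGammaL(n+1,q)_\K$ meets $\Persp(H_\infty)$ trivially. Your write-up is somewhat more explicit than the paper's in checking that the normalisation yields a genuine group homomorphism and that the image together with $\Persp(H_\infty)$ exhausts the whole group, but the underlying idea is identical.
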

\begin{proof} Since $\K$ spans $H_\infty$, $\PGammaL(n+2,q)_\K$ is contained in $\PGammaL(n+2,q)_{H_\infty}$. By Lemma \ref{ext}, we see that $\PGammaL(n+2,q)_\K$ is an extension of $\Persp(H_\infty)$ by $\PGammaL(n+1,q)_\K$. This extension splits if and only if $\PGammaL(n+1,q)_\K$ can be embedded in $\PGammaL(n+2,q)_{\K}$ in such a way that it intersects trivially with $\Persp(H_\infty)$. By assumption, $\PGammaL(n+1,q)_\K$ fixes a point $P\in H_\infty$.  Suppose that $P$ has coordinates $(0,c_1,c_2,\ldots,c_{n+1})$, where the first non-zero coordinate equals one. This implies that for each $\beta\in \PGammaL(n+1,q)_\K$, there exists a unique $(n+1)\times (n+1)$ matrix $B=(b_{ij})$, $1\leq i,j\leq n+1$, and an element $\theta \in \Aut(\F_q)$ corresponding to $\beta$, such that $(c_1,c_2,\ldots,c_{n+1})^\theta.B=(c_1,c_2,\ldots,c_{n+1})$. Moreover, the obtained semi-linear maps $(B,\theta)$ form a subgroup $G$ of $\Gamma\mathrm{L}(n+1,q)$. Let $A_\beta=(a_{ij})$, $0\leq i,j\leq n+1$, be the $(n+2)\times (n+2)$ matrix with $a_{00}=1$, $a_{i0}=a_{0j}=0$ for $i,j\geq 1$ and $a_{ij}=b_{ij}$ for $1\leq i,j\leq n+1$. It is clear that $(A_\beta, \theta)$ defines an element of $\PGammaL(n+2,q)_{H_\infty}$, corresponding to a collineation $\alpha$ acting in the same way as $\beta$ on $H_\infty$. If $\theta \neq \mathbbm{1}$, then $\alpha$ is not a perspectivity. If $\theta = \mathbbm{1}$, then $\alpha$ fixes every point on the line through $P$ and $(1,0,\ldots,0)$, thus fixes at least two affine points, and hence is not a perspectivity. This implies that we have found a subgroup of $\PGammaL(n+2,q)_{\K}$ isomorphic to $\PGammaL(n+1,q)_\K$ and intersecting $\Persp(H_\infty)$ trivially.

The second claim can be proved in the same way.
\end{proof}

\begin{corollary}
If the setwise stabiliser $\PGL(n+1,q)_\K$ of a point set $\K$ spanning $H_\infty=\PG(n,q)$ fixes a point of $H_\infty$, and $\PGammaL(n+1,q)_\K \cong \PGL(n+1,q)_\K \rtimes \Aut(\F_{q_0})$, for $q_0=p^{h_0}$, $h_0 | h$ or $\PGammaL(n+1,q)_\K \cong \PGL(n+1,q)_\K $, then
$\PGammaL(n+2,q)_\K \cong \Persp(H_\infty)\rtimes \PGammaL(n+1,q)_\K$.
\end{corollary}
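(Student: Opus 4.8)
The plan is to deduce the corollary from Theorem~\ref{isom}. Since $\K$ spans $H_\infty$ we have $\PGammaL(n+2,q)_\K\le\PGammaL(n+2,q)_{H_\infty}$, so it suffices to check the hypothesis of Theorem~\ref{isom}: that $\PGammaL(n+1,q)_\K$ fixes a point of $H_\infty$. The alternative $\PGammaL(n+1,q)_\K\cong\PGL(n+1,q)_\K$ is handled directly: any $g\in\PGammaL(n+2,q)_\K$ stabilises $\langle\K\rangle=H_\infty$, hence restricts to a collineation of $H_\infty$ fixing $\K$ with the same companion field automorphism as $g$; as $\PGammaL(n+1,q)_\K=\PGL(n+1,q)_\K$ this automorphism is trivial, so $g\in\PGL(n+2,q)_\K$, i.e. $\PGammaL(n+2,q)_\K=\PGL(n+2,q)_\K$, and the $\PGL$-form of Theorem~\ref{isom} gives $\PGL(n+2,q)_\K\cong\Persp(H_\infty)\rtimes\PGL(n+1,q)_\K$, as required.

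Now assume $\PGammaL(n+1,q)_\K\cong\PGL(n+1,q)_\K\rtimes\Aut(\F_{q_0})$; put $G=\PGammaL(n+1,q)_\K$ and $N=\PGL(n+1,q)_\K\trianglelefteq G$, so $G/N$ is cyclic. The fixed point set $F$ of $N$ in $H_\infty$ is non-empty by hypothesis and $G$-invariant since $N\trianglelefteq G$. Lifting $N$ to $\hat N\le\GL(n+1,q)$, the points of $F$ are exactly those spanned by a common eigenvector of $\hat N$; sorting the common eigenvectors by the character $\chi\colon\hat N\to\F_q^{*}$ they afford writes $F$ as a disjoint union of projective subspaces $\PG(V_{\chi_1}),\dots,\PG(V_{\chi_r})$, which $G$ permutes. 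I would then argue that a generator $\sigma$ of the complement $\Aut(\F_{q_0})$ stabilises one component, say $\PG(V_{\chi_1})$, and that this component contains a $\sigma$-fixed point $P$: this is clear if $\dim V_{\chi_1}=1$, and otherwise the collineation induced by $\sigma$ on $\PG(V_{\chi_1})$ has order equal to that of its companion field automorphism (a collineation acting as a scalar on the underlying $\F_q$-space has trivial companion automorphism), so after rescaling its matrix to have finite order in the semilinear group --- possible since the norm from $\F_q$ onto the fixed field of the companion automorphism is surjective --- Galois descent (additive Hilbert~90) furnishes a non-zero fixed space. Such a $P$ lies in $F$, hence is fixed by $N$, and also by $\sigma$, hence by $G=\langle N,\sigma\rangle$; Theorem~\ref{isom} then finishes the proof.

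The main obstacle is the step asserting that the cyclic complement $\Aut(\F_{q_0})$ stabilises one of the eigenspace-components of $F$, i.e. that $\PGammaL(n+1,q)_\K$ fixes a point of $H_\infty$: this is not forced by group theory (a cyclic group may permute a finite set fixed-point-freely) and must come from the geometry of $\K$. It is immediate whenever $\PGL(n+1,q)_\K$ fixes a \emph{unique} point of $H_\infty$, so that $F$ is a single point --- the situation in the applications --- and in general needs a closer analysis of the action of $\PGammaL(n+1,q)_\K/\PGL(n+1,q)_\K$ on the components $\PG(V_{\chi_i})$. The subsidiary fact used, that a collineation whose order equals that of its companion automorphism fixes a point, is the standard statement that such a collineation is conjugate to a coordinatewise Frobenius map and hence fixes a subgeometry $\PG(\,\cdot\,,q^{1/h_0})$.
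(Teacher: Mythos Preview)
Your treatment of the case $\PGammaL(n+1,q)_\K=\PGL(n+1,q)_\K$ is fine. In the other case, however, your strategy --- to show that $\PGammaL(n+1,q)_\K$ itself fixes a point of $H_\infty$ and then apply the $\PGammaL$-version of Theorem~\ref{isom} --- has a real gap, and you have correctly identified it: nothing forces a generator of the cyclic complement $\Aut(\F_{q_0})$ to stabilise one of the eigenspace-components $\PG(V_{\chi_i})$ of the fixed-point set $F$ of $\PGL(n+1,q)_\K$. The hypothesis $\PGammaL(n+1,q)_\K\cong\PGL(n+1,q)_\K\rtimes\Aut(\F_{q_0})$ is a purely group-theoretic splitting statement and carries no information about how the abstract complement acts on $H_\infty$; a cyclic group may permute a finite set without fixed points, and no geometric input in the hypotheses rules this out. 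Your remark that the step ``must come from the geometry of $\K$'' is an admission that the argument is incomplete, and indeed the reduction to the $\PGammaL$-form of Theorem~\ref{isom} cannot be carried through from the stated hypotheses alone.

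The paper does not attempt to produce a common fixed point for $\PGammaL(n+1,q)_\K$ at all. Instead it builds the complement to $\Persp(H_\infty)$ directly: the $\PGL$-part of Theorem~\ref{isom} already supplies a copy of $\PGL(n+1,q)_\K$ inside $\PGL(n+2,q)_\K$ meeting $\Persp(H_\infty)$ trivially, and the remaining cyclic piece $\Aut(\F_{q_0})\le\Aut(\F_q)$ is embedded in $\PGammaL(n+2,q)$ by $\theta\mapsto(I,\theta)$ with $I$ the $(n+2)\times(n+2)$ identity, an embedding that visibly intersects $\Persp(H_\infty)$ trivially. The semidirect-product hypothesis on $\PGammaL(n+1,q)_\K$ is then used to assemble these two pieces into a single complement isomorphic to $\PGammaL(n+1,q)_\K$, rather than to locate a fixed point in $H_\infty$.
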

\begin{proof}
It is clear that $\Aut(\F_q)$ can be embedded in $\PGammaL(n+2,q)$ by mapping $\theta \in \Aut(\F_q)$ to the semi-linear map $(I,\theta) \in \PGammaL(n+2,q)$ where $I$ is the $(n+2) \times (n+2)$ identity matrix. Since $\Persp(H_\infty)$ intersects $\Aut(\F_{q})$ trivially, the corollary follows.
\end{proof}

Examples of point sets satisfying the conditions of Theorem \ref{isom} are ubiquitous; the case that $\K$ is a $q$-arc in $H_\infty$ is studied in detail in \cite{wij}.

\noindent
Affiliation of the authors:\\

\noindent
Department of Mathematics\\
Vrije Universiteit Brussel\\
Pleinlaan 2\\
1050 Brussel\\

\noindent
\{pcara,srottey,gvdevoor\}\makeatletter @vub.ac.be

\end{document}